\newtheorem{thm}{Theorem}
\newtheorem{lemma}[thm]{Lemma}
\newtheorem{defn}{Definition}
\theoremstyle{definition}
\definecolor{magenta}{rgb}{.5,0,.5} 
\definecolor{dred}{rgb}{.5,0,0} 
\definecolor{green}{rgb}{0,.5,0} 
\definecolor{blue}{rgb}{0,0,0.5} 
\definecolor{black}{rgb}{0,0,0} 
\definecolor{dgreen}{rgb}{0,.3,0} 
\definecolor{vdred}{rgb}{.3,0,0} 
\definecolor{red}{rgb}{1,0,0} 
\definecolor{gray}{rgb}{.5,.5,.5} 
\definecolor{cerulean}{rgb}{0,.48,.65}
\def\ni{\noindent}
\def\ms{\medskip}
\def\onto{{\kern3pt\to\kern-8pt\to\kern3pt}}
\def\<{\langle}
\def\>{\rangle}
\def\|{{\ |\ }}
\def\*{^{\star}}
\newcommand{\executeiffilenewer}[3]{%
\ifnum\pdfstrcmp{\pdffilemoddate{#1}}%
{\pdffilemoddate{#2}}>0%
{\immediate\write18{#3}}\fi%
}
\newcommand{%
\executeiffilenewer{.svg}{.pdf}%
{inkscape -z -D --file=.svg %
--export-pdf=.pdf --export-latex}%
\input{.pdf_tex}%
}[1]{%
\executeiffilenewer{#1.svg}{#1.pdf}%
{inkscape -z -D --file=#1.svg %
--export-pdf=#1.pdf --export-latex}%
\input{#1.pdf_tex}%
}
\address{Department of Mathematics \\ Cornell University\\
Ithaca, NY 14853--4201 \\ USA}
\email{{\tt yl763@cornell.edu}}
\begin{document}

\title{A Hyperbolic group with a finitely presented subgroup that is not of type 
$\textup{FP}_3$.}

\author{Yash Lodha}

\date \today

\begin{abstract}
\ni  Brady proved that there are hyperbolic groups with finitely 
presented subgroups that are not of type $\textup{FP}_3$ (and hence not hyperbolic).
We reprove Brady's theorem by presenting a new construction.
Our construction uses Bestvina-Brady Morse theory, 
but does not involve branched coverings.
 \ms

\footnotesize{\ni \textbf{2010 Mathematics Subject
Classification:  20F67}  \\ \ni \emph{Key words and phrases:} 
hyperbolic group, finiteness properties}
\end{abstract}
\maketitle
\section{Introduction}

Hyperbolic groups were introduced by Gromov \cite{Gromov} 
as a generalization of fundamental groups of negatively curved manifolds
and of finitely generated free groups. 
A finitely generated group is hyperbolic if its Cayley graph 
is hyperbolic as a metric space. 
Hyperbolic groups are finitely presented, 
and have solvable word and conjugacy problems.

The property of being hyperbolic is not inherited by subgroups. 
For instance finitely generated free groups have infinitely generated subgroups which 
cannot be hyperbolic since hyperbolic groups are finitely presentable. 
It is natural to then ask if the property of being hyperbolic is inherited 
by finitely generated subgroups. 
Rips constructed the first examples of finitely generated 
subgroups of hyperbolic groups that are not finitely presentable.\cite{Rips}

\begin{thm}{(Rips)}
There are hyperbolic groups with subgroups that are finitely generated but not finitely 
presentable.
\end{thm}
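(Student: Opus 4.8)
\ms
\ni \emph{Proof idea.} The natural route is the \emph{Rips construction}. The plan is to start from a well-chosen finitely presented group $Q$ and manufacture a short exact sequence $1\to N\to G\to Q\to 1$ in which $G$ satisfies the small cancellation condition $C'(1/6)$ (hence is hyperbolic) while $N$ is finitely generated but not finitely presentable; I would take $Q=\Z^2=\langle a_1,a_2\mid [a_1,a_2]\rangle$. Concretely, adjoin two new generators $x_1,x_2$ and let $G$ have generators $a_1,a_2,x_1,x_2$ with two families of defining relators: for each $i,j\in\{1,2\}$ and each sign $\varepsilon$, a relator $a_i^{\varepsilon}x_j a_i^{-\varepsilon}=w_{i,j,\varepsilon}$; and one relator $[a_1,a_2]=v$. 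Here $w_{i,j,\varepsilon}$ and $v$ are positive words in $x_1,x_2$, and the whole scheme works only if they are chosen so that this finite presentation is $C'(1/6)$. The standard device is to take each such word to be a long product of blocks $x_1 x_2^{\,t}$ whose exponents $t$ are strictly increasing and lie in pairwise disjoint ranges across the different words, so that any subword long enough to be a piece occurs in essentially one place. Checking that every piece is shorter than one sixth of every relator containing it is the technical heart of the construction, and is the step I expect to be the main obstacle.

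\ni Granting that, the remaining steps are routine. Set $N=\langle x_1,x_2\rangle\le G$. The relators $a_i^{\varepsilon}x_j a_i^{-\varepsilon}=w_{i,j,\varepsilon}$ say exactly that conjugation by $a_i^{\pm1}$ carries each $x_j$ into $\langle x_1,x_2\rangle$, so $N$ is a normal subgroup; being normal it equals the normal closure of $\{x_1,x_2\}$, and hence $N$ is $2$-generated, in particular finitely generated. Imposing $x_1=x_2=1$ kills the first family of relators and turns the last into $[a_1,a_2]=1$, so $G/N\cong\Z^2$. Finally, a $C'(1/6)$ group is torsion-free with aspherical presentation $2$-complex, so $G$ is a torsion-free hyperbolic group which is finitely presented with $\operatorname{cd}(G)\le 2$.

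\ni It remains to see that $N$ is not finitely presentable. Suppose it were, so that $N$ is of type $\textup{FP}_2$. By a theorem of Bieri, a normal subgroup of type $\textup{FP}_2$ with infinite quotient inside a group of type $\textup{FP}_2$ and cohomological dimension at most $2$ must be free; hence $N$ is free, and being $2$-generated it is trivial, infinite cyclic, or free of rank $2$. Thus $G$ is an extension of $\Z^2$ by a free group $F$ with $\rank(F)\le 2$. If $\rank(F)\le 1$ then $G$ is virtually nilpotent and not virtually cyclic, so it contains a copy of $\Z^2$; and if $F\cong F_2$ then, since $\operatorname{Out}(F_2)\cong GL_2(\Z)$ is virtually free, the conjugation action $\Z^2\to\operatorname{Out}(F_2)$ has virtually cyclic image, so a finite-index subgroup isomorphic to $\Z^2$ acts on $F$ by inner automorphisms, and splitting off the inner part again exhibits a copy of $\Z^2$ inside $G$. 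Either way $G$ contains $\Z^2$, contradicting hyperbolicity. Hence $N$ is not of type $\textup{FP}_2$, in particular not finitely presentable, and it is a finitely generated subgroup of the hyperbolic group $G$, as required. (Alternatively one can avoid Bieri's theorem: since $\Z^2$ is infinite and $G$ is torsion-free, $G$ is one-ended, and an infinite finitely generated normal subgroup of a one-ended hyperbolic group is one-ended, hence not free.)
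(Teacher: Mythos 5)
Your argument is correct in outline (with the one step you flag — the explicit choice of words $w_{i,j,\varepsilon}$, $v$ making the presentation $C'(1/6)$ — being standard Rips-construction bookkeeping), but it proceeds by an entirely different route from the paper. You run the classical Rips small-cancellation machine over $Q=\Z^2$ and then argue homologically that the kernel $N$ is not of type $\textup{FP}_2$: Bieri's theorem forces $N$ to be free if it were $\textup{FP}_2$, and you rule out the free case by showing each rank $0,1,2$ would plant a $\Z^2$ inside the hyperbolic group $G$. The paper instead proves this statement via Section~3: it builds a square complex $X$ as a subcomplex of a product $\Theta_1\times\Theta_2$ of two $22$-petal roses, using a carefully designed bipartite graph $\Gamma$ with no $3$- or $4$-cycles so that every vertex link of $X$ is $\Gamma$ and hence $\widetilde X$ is $\textup{CAT}(-1)$; it then applies Bestvina--Brady Morse theory to the height function $X\to S^1$, whose ascending and descending links are circles, to read off directly that the kernel is $\textup{FP}_1$ but not $\textup{FP}_2$. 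The trade-off is real: your small-cancellation route is shorter and self-contained for this one theorem, whereas the paper's construction is deliberately designed as a $2$-dimensional warm-up for the $3$-dimensional complex $\Delta$ of Section~4 (ascending/descending links $\cong S^2$, giving $\textup{FP}_2$ but not $\textup{FP}_3$), a jump that the $C'(1/6)$ presentation-complex argument, being intrinsically $2$-dimensional with $\operatorname{cd}(G)\le 2$, has no natural analogue of. One small caveat: your parenthetical alternative at the end (``an infinite finitely generated normal subgroup of a one-ended hyperbolic group is one-ended'') is not an off-the-shelf fact and would need its own justification; the Bieri-based argument you give first is the one to rely on.
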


So then one asks whether finitely presented subgroups inherit hyperbolicity.
Gromov gave what appeared to be an example of a non-hyperbolic 
finitely presented subgroup of a hyperbolic group. 
But it was later discovered by Bestvina that the ambient group of this
example is not hyperbolic \cite{Brady}. 
In $1999$ Noel Brady constructed the first example of a hyperbolic group 
with a finitely presented subgroup that is not hyperbolic. 
In particular, he showed the following. (Theorem $6.1$ in \cite{Brady}.) 

\begin{thm}{(Brady)}
There exists a piecewise Euclidean cubical complex $Y$ and a continuous map 
$f:Y\to S^1$ with the following properties.
\begin{enumerate}
 \item The image of $\phi=f_{*}:\pi_1(Y)\to \pi_1(S^1)$ is of finite index in $\pi_1(S^{1})$.
 \item The universal cover $X=\widetilde{Y}$ is a $\textup{CAT}(0)$ metric space.
 \item $X$ has no embedded flat planes.
 \item The map $f$ lifts to a $\phi$-equivariant Morse function $\tilde{f}:X\to \mathbb{R}$
 whose ascending and descending links are all homeomorphic to $S^2$.
\end{enumerate}
\end{thm}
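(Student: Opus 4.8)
The plan is to construct $Y$ directly as a compact, nonpositively curved piecewise Euclidean cubical complex carrying a combinatorial circle-valued ``height'' map $f\colon Y\to S^1=\Reals/\Z$, rather than obtaining it as a branched cover of a Salvetti complex as in Brady's original argument. Starting from a flag triangulation of $S^2$, one assembles a finite cube complex $Y$ whose cells record the incidences of that triangulation, together with a map $f$ that is affine and nonconstant on every cube (so each edge either maps to a point or wraps once around $S^1$). By design $f$ has no critical points in the interior of a cube, and at each vertex $v$ the link $\mathrm{lk}(v)$ contains disjoint full subcomplexes $\mathrm{lk}^{\uparrow}(v)$ and $\mathrm{lk}^{\downarrow}(v)$ --- the ascending and descending links in the sense of Bestvina--Brady Morse theory --- which the combinatorics of the construction arranges to be triangulated $2$-spheres; together with the fact that $f$ lifts to the $\phi$-equivariant function $\tilde f\colon X\to\Reals$ covering it, this gives (4). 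Property (1) is then immediate: $f$ is essential, so $\phi=f_{*}$ has nonzero, hence finite-index, image in $\pi_1(S^1)\cong\Z$. Property (2) is Gromov's link condition: one checks that every vertex link of $Y$ is flag, so $Y$ is locally $\CAT(0)$ and its universal cover $X=\widetilde Y$ is $\CAT(0)$.

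The heart of the matter, and the step I expect to be the main obstacle, is property (3): the cocompact $\CAT(0)$ complex $X$ must contain no isometrically embedded Euclidean plane. (Granting this, the Flat Plane Theorem makes $\pi_1(Y)$ hyperbolic, and the Bestvina--Brady analysis of $\tilde f$ delivers the intended pathological subgroup: the kernel of the induced epimorphism to $\Z$ --- equivalently $\pi_1$ of the infinite cyclic cover of $Y$ --- is of type $\textup{F}_2$ because $S^2$ is simply connected, yet not of type $\textup{FP}_3$ because $\widetilde H_2(S^2)\cong\Z\neq 0$, so the links are not $2$-acyclic.) The difficulty is that the argument cannot be purely local: the vertex links of $Y$ are triangulated spheres, which necessarily fail to be $5$-large and hence support short closed local geodesics, and a flat plane passing through a vertex would simply pick one out. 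The remedy must be to exploit global structure of $X$ that the links do not see --- the hyperplane (wall) structure of the cube complex, and the rigidity that $\tilde f$ is affine on any flat and therefore sweeps out a flat by its level lines --- in order to show that the local ``flat circles'' in the links can never be assembled consistently into a global $\mathbb{E}^2$. Concretely, I would aim to rule out any convex subcomplex of $X$ isometric to a product of two bi-infinite combinatorial lines (a ``combinatorial flat''), and then argue that in this cocompact cube-complex setting an embedded flat plane would force such a combinatorial flat; arranging the combinatorics of $Y$ so that no ``grid'' of mutually crossing hyperplanes can occur is where the construction must be genuinely ingenious, and this is precisely the point at which Brady needed branched covers and we will not.

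I anticipate three further delicate points. First, finding a single flag triangulation of $S^2$ and a cubical thickening that simultaneously makes every vertex link flag \emph{and} supports the global no-flat argument --- a finite but subtle combinatorial search. Second, making the passage from the hyperplane/Morse-theoretic ``no grid'' statement to the genuinely metric ``no flat plane'' statement rigorous, since for $\CAT(0)$ cube complexes one must be careful that a flat is tracked closely enough by combinatorial data. Third, ensuring the ascending and descending links are honestly \emph{homeomorphic} to $S^2$, not merely $2$-acyclic or homotopy-equivalent to $S^2$: the finiteness conclusions would tolerate less, but the theorem as stated demands homeomorphism, so the triangulations must be controlled on the nose throughout the construction. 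I would organise the write-up as: (i) define the triangulation, $Y$, and $f$; (ii) verify the flag condition and read off (1) and (2); (iii) identify $\mathrm{lk}^{\uparrow}(v)$ and $\mathrm{lk}^{\downarrow}(v)$ and check each is $S^2$, giving (4); (iv) carry out the hyperplane/flat-plane analysis to obtain (3).
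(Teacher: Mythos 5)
Your proposal gets the scaffolding right --- a compact nonpositively curved cube complex carrying an essential circle-valued Morse function whose ascending and descending links are $2$-spheres, followed by a no-flat-planes argument --- but both the concrete construction and the plan for the hard step depart from the paper, and the first departure is a real gap. The paper does not start from a flag triangulation of $S^2$. It first builds an auxiliary bipartite graph $\Gamma$ on $44$ vertices (Definition~\ref{Gamma}), tuned so that it has no $3$- or $4$-cycles while four prescribed $22$-vertex subgraphs are each a single cycle (Lemma~\ref{graph}); it then takes three copies $U,V,W$ of $K_{22,22}$ labeled by $V(\Gamma)$ and defines $\Delta$ to be the full subcomplex of the product cube complex $U\times V\times W$ on a vertex set carved out by adjacency conditions read off from $\Gamma$ via the schematic graph $\Omega$. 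Your assertion that ``the vertex links of $Y$ are triangulated spheres'' does not describe this complex: the links in $\Delta$ are the joins $\Upsilon\star\Upsilon\star\Upsilon$ at type~$1$ vertices and $\Gamma\star\Upsilon$ at type~$2$ vertices, where $\Upsilon$ is a four-point discrete set, and neither of these is $S^2$. Only the ascending and descending \emph{sub}links collapse to $S^0\star S^0\star S^0$ or $S^1\star S^0$, which are $S^2$. This distinction is load-bearing: the full links necessarily contain empty $4$-cycles (so $\textup{CAT}(-1)$ is out, and, as you correctly observe, the no-flat-planes argument cannot be purely local), yet only the ascending and descending sublinks feed into the Morse-theoretic conclusions. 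Without a concrete $\Gamma$-like gadget, none of (1)--(4) can actually be verified, and the flag-triangulation-of-$S^2$ idea does not supply one.

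On the flat-plane step you diagnose the difficulty correctly, but the remedy you sketch --- a hyperplane/wall ``no combinatorial grid'' analysis --- is not the paper's route. The paper adapts Brady's own argument: it isolates ``special $1$-cells'' (edges with both endpoints of type~$2$ whose direction lies in the $\Upsilon$-factor of each endpoint's link, Lemma~\ref{sedge}); shows in Step~1 that any isometrically embedded plane $i(\mathbb{R}^2)\subset\widetilde\Delta$ must meet some special $1$-cell transversely at a point $p$; and shows in Step~2 that the cubes through $p$ crossed by the plane are in bijection with the edges of a cycle in $\Gamma$, so, because $\Gamma$ has girth at least $5$, the angle around $p$ inside the flat exceeds $2\pi$ --- a contradiction. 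A hyperplane-crossing argument might also succeed, but as written it is a heuristic, not a proof, and it would not be what the paper does. The theorem as stated asks for genuine homeomorphism of the ascending and descending links with $S^2$, which you rightly flag; here the paper gets this for free because a join of two circles with $S^0$ (or of three copies of $S^0$) is literally $S^2$, rather than by triangulation control.
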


The cube complex $Y$ is constructed as a branched covering of a product of graphs.
A consequence of this is the following theorem. (Theorem $1.1$ in \cite{Brady})

\begin{thm}{(Brady)}
 There exists a short exact sequence of groups
 $$1\to H\to G\to \mathbb{Z}\to 1$$ such that
 \begin{enumerate}
  \item $G$ is torsion free hyperbolic.
  \item $H$ is finitely presented.
  \item $H$ is \emph{not} of type $FP_3$.
 \end{enumerate}
 In particular, $H$ is not hyperbolic.
\end{thm}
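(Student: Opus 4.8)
The plan is to extract the group $G$ and the subgroup $H$ directly from the cube complex $Y$ and the map $f$ supplied by the preceding theorem, and then to read off the three properties from conditions (1)--(4) using the Flat Plane Theorem and Bestvina--Brady Morse theory. First I would set $G = \pi_1(Y)$ and let $\phi = f_* : G \to \pi_1(S^1) \cong \mathbb{Z}$. By condition (1) the image of $\phi$ is a finite-index subgroup $k\mathbb{Z} \leq \mathbb{Z}$; post-composing with the isomorphism $k\mathbb{Z} \cong \mathbb{Z}$ (and rescaling $\widetilde f$ by $1/k$, which does not affect its Morse structure) we may assume $\phi$ is surjective. Put $H = \ker \phi$. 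This immediately yields the short exact sequence $1 \to H \to G \to \mathbb{Z} \to 1$.

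For property (a), since $Y$ is a finite non-positively curved piecewise Euclidean cubical complex whose universal cover $X = \widetilde Y$ is CAT(0) by condition (2), the deck group $G$ acts freely, properly, and cocompactly on $X$ by cellular isometries; because a finite group acting by isometries on a CAT(0) space fixes a point, freeness forces $G$ to be torsion free. By condition (3), $X$ contains no isometrically embedded flat plane, so by the Flat Plane Theorem (Bridson--Haefliger) the cocompact CAT(0) space $X$ is Gromov hyperbolic, and hence $G$, being quasi-isometric to $X$, is hyperbolic.

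Next I would bring in the $\phi$-equivariant Morse function $\widetilde f : X \to \mathbb{R}$ of condition (4), which satisfies $\widetilde f(g\cdot x) = \widetilde f(x) + \phi(g)$. The group $H$ acts freely on the contractible space $X$ and preserves the filtration by sublevel sets $X_t := \widetilde f^{-1}([-t,t])$; moreover $H$ acts cocompactly on each $X_t$, since $G$ acts cocompactly on $X$ and $G/H \cong \mathbb{Z}$ acts cocompactly on $\mathbb{R}$. Brown's criterion therefore detects the finiteness properties of $H$ from the connectivity behaviour of this filtration, and Bestvina--Brady Morse theory describes how the homotopy type of $X_t$ changes as $t$ grows: passing a critical value is, up to homotopy, coning off copies of the ascending and descending links. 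By condition (4) these links are homeomorphic to $S^2$, which is simply connected, so every inclusion $X_t \hookrightarrow X_{t'}$ is a $\pi_0$- and $\pi_1$-isomorphism; the filtration is thus essentially $1$-connected and Brown's criterion gives that $H$ is of type $F_2$, i.e.\ finitely presented, which is property (b).

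Finally, for property (c), I would run the same Morse-theoretic analysis one degree higher. Coning off a copy of $S^2$ kills nothing in $H_2$ and, since $\widetilde H_2(S^2) \cong \mathbb{Z} \neq 0$, each passage of a critical value of $\widetilde f$ can contribute a nontrivial class to $H_2$ of the sublevel set. One shows that the maps on $H_2$ induced by the filtration fail the stabilization condition in Brown's criterion for type $FP_3$ — the relevant system of second homology groups is not essentially trivial — so $H$ is not of type $FP_3$; since hyperbolic groups are of type $F$ and hence $FP_\infty$, $H$ is not hyperbolic, completing the proof. The main obstacle is exactly this last step: rather than merely observing that $S^2$ is not $2$-connected, one must carefully track the Morse function and the $H$-action across the entire range of critical values, and verify that the $\mathbb{Z}$-summands produced by the $S^2$ links genuinely survive (do not cancel) in $H_2(X_t)$ as $t \to \infty$, so that the Brown criterion indeed fails at level $3$.
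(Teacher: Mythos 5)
Your proposal follows the same route as the paper's sketch: take $G=\pi_1(Y)$ and $H=\ker f_*$ from Brady's cube complex, deduce that $G$ is torsion-free hyperbolic from the free cocompact action on the flat-plane-free $\mathrm{CAT}(0)$ universal cover via the Flat Plane Theorem, and read off the finiteness properties of $H$ from the $S^2$ ascending/descending links via Bestvina--Brady Morse theory (Theorem~\ref{mt}). Your additional care about the image of $f_*$ being merely finite-index (rescaling to surjectivity) and your honest flagging of the nontrivial step in the negative direction (that the $\widetilde H_2(S^2)$ classes must survive in the limit, which is precisely the content of the Bestvina--Brady machinery that the paper invokes as a black box) are reasonable refinements but do not change the overall structure of the argument.
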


The group $G=\pi_1(Y)$ is the fundamental group of a nonpositively curved cube complex whose 
universal cover contains no flat planes. 
So by a theorem of Bridson, Eberlin and Gromov 
\cite{Bridson} $\widetilde{Y}$ admits a hyperbolic metric.
Since $G$ acts properly, cocompactly and by isometries on $\widetilde{Y}$, it is a hyperbolic group. 
The subgroup $H$ emerges as the kernel of the induced homomorphism to $\mathbb{Z}$. 
Brady uses Bestvina--Brady Morse theory \cite{BeBr} to prove 
the finiteness properties of $H$.
Brady asks if there are examples of hyperbolic groups with subgroups of 
type $F_n$ but not type $F_{n+1}$ for all $n\in \mathbb{N}$ \cite{Brady}. 

We shall reprove the results of Rips and Brady using slightly different methods.
In Section $3$ we present an example of a finitely generated but not finitely presentable 
subgroup of a $\textup{CAT}(-1)$ group.
Reading this construction will prepare the reader for the construction 
in Section $4$.
In Section $4$, we shall construct a hyperbolic group with a 
finitely presented subgroup which is not of type $FP_3$. 
Our cube complexes will emerge as subcomplexes of products of graphs,
and do not require branched coverings. 

\section{Preliminaries}

\subsection{Nonpositively curved metric spaces and groups.}

Let $G$ be a finitely generated group with a finite generating set $A$
that is closed under inverses. 
Recall that the \emph{cayley graph} $\Gamma_{G,A}$ of the group 
with respect to $A$ is formed as follows:
The vertices of the graph are elements of the group and for every
$g\in G$ and $a\in A$, the vertices $g,ga$ are connected by an edge.
This forms a metric space by declaring that each edge is isometric to
the unit interval and the metric on the graph is the induced path metric.
This is a \emph{geodesic} metric space, i.e. for any pair of points 
$x,y\in \Gamma_{G,A}$ there is an isometric embedding $\phi:[0,r]\to \Gamma_{G,A}$
such that $\phi(0)=x,\phi(r)=y$ and $r=d(x,y)$.

Given a geodesic metric space $X$,
a triple $x,y,z\in X$ forms a \emph{geodesic} triangle $\triangle(x,y,z)$
obtained by joining $x,y,z$ pairwise by geodesics.
Let $\triangle_{\mathbb{E}^2}(x^{\prime},y^{\prime},z^{\prime})$ be a Euclidean triangle
such that the Euclidean distance between each pair $(x^{\prime},y^{\prime}),
(x^{\prime},z^{\prime}),(y^{\prime},z^{\prime})$
is equal to the distance between the corresponding pair $(x,y),(x,z),(y,z)$.
The triangle $\triangle_{\mathbb{E}^2}(x^{\prime},y^{\prime},z^{\prime})$
is called a \emph{comparison triangle} for $\triangle(x,y,z)$.
If $p$ is a point on the geodesic joining $x,y$, there 
is a point $p^{\prime}$ in the geodesic joining $x^{\prime},y^{\prime}$
such that $d_X(x,p)=d_{\mathbb{E}^2}(x^{\prime},p^{\prime})$.
This is called a \emph{comparison point}. 

$X$ is said to be $\textup{CAT}(0)$ if for any geodesic triangle 
$\triangle(x,y,z)$ and a pair $p,q\in \triangle(x,y,z)$, 
the corresponding comparison points 
$p^{\prime},q^{\prime}\in \triangle_{\mathbb{E}^n}(x^{\prime},y^{\prime},z^{\prime})$
have the property that $d_X(p,q)\leq d_{\mathbb{E}^2}(p^{\prime},q^{\prime})$.

Similarly we define $\textup{CAT}(-1)$ spaces by replacing Euclidean space
in the above definition by a complete, simply connected,
Riemannian $2$-manifold of constant curvature $-1$.
A group is said to be $\textup{CAT}(0)$ (or $\textup{CAT}(-1)$) if it acts
properly, cocompactly and by isometries on a $\textup{CAT}(0)$ 
(or respectively a $\textup{CAT}(-1)$) space.

A path connected metric space $X$ is said to be hyperbolic if there is a $\delta>0$
such that for any geodesic triangle $\triangle (x,y,z)$ and a point $p$ 
on the geodesic connecting $x,y$ there is a point $q$ in the union of the geodesics
connecting $y,z$ and $x,z$ such that $d_X(p,q)<\delta$.
This property is a quasi isometry invariant.
A group is said to be hyperbolic if its cayley graph
(with respect to any generating set) is hyperbolic as a metric space.

A comprehensive introduction and survey of $\textup{CAT}(0)$, $\textup{CAT}(-1)$ and hyperbolic
spaces can be found in \cite{BridsonH}.

\subsection{Cube Complexes and nonpositive curvature}

By a $\emph{regular }n\emph{-cube}$ $\square^n$ 
we mean a cube in $\mathbb{R}^n$ which is isometric 
to the cube $[0,1]^n$ in $\mathbb{R}^n$. 
Informally a cube complex is a cell complex of 
regular Euclidean cubes glued along their faces by isometries. 
More formally, a cube complex is a cell complex $X$ that satisfies the following conditions.

\begin{enumerate}
 \item For each $n$-cell $e$ in $X$ there is an isometry $\chi_e:\square^n\to e$.
 
 \item  A map $f:\square^n\to e$ is an \emph{admissible characteristic function} 
 if it is $\chi_e$ precomposed with a partial isometry of $\mathbb{R}^n$. 
 For any cell $e$ in $X$ the restriction of any $\chi_e$ 
 to a face of $\square^n$ is an admissible characteristic function of a cell of $C$.
\end{enumerate}

The metric on such cube complexes is the piecewise Euclidean metric (see \cite{BridsonH}). 

\begin{defn}
 Given a face $f$ of a regular cube $\square^n$, let $x$ be the center of this face. 
 The link $Lk(f,\square^n)$ is the set of unit tangent vectors at 
 $x$ that are orthogonal to $f$ and point in $\square^n$. 
 This is a subset of the unit sphere $S^{n-1}$ which is 
 homeomorphic to a simplex of dimension $n-\textup{dim}(f)-1$. 
 This admits a natural spherical metric, in which the dihedral angles are right angles. 
 \end{defn}
 
\begin{defn}
 Let $f$ be a cell in $X$. 
 Let $S=\{C\mid C\textup{ is a cube in }X\textup{ that contains }f\textup{ as a face }\}$. 
 The link $Lk(f,X)=\bigcup_{C\in S}Lk(f,C)$. 
 This is a complex of spherical ``all right'' simplices glued along their faces by isometries.
 This admits a natural piecewise spherical metric.
\end{defn}
Gromov gave the following characterizations of $\textup{CAT}(0)$ and 
$\textup{CAT}(-1)$ cube complexes by combinatorial conditions 
on the links of vertices in \cite{Gromov}. 
A nice survey of these results can be found in \cite{Davis} and \cite{Davis2} 
(see Proposition $I.6.8$).

\begin{defn}
 A simplicial complex is said to satisfy the \emph{no $\square$-condition} 
 if there are no $4$-cycles in the $1$-skeleton for which none 
 of the pairs of opposite vertices of the cycle
 are connected by an edge. 
 A simplicial complex $Z$ is called a ``flag'' complex 
 if any set $v_1,...,v_n$ of vertices of $Z$ that are pairwise 
 connected by an edge span a simplex. 
 This is also known as the ``no empty triangles'' condition.
\end{defn}

\begin{defn}
 A cube complex $X$ is said to be nonpositively curved 
 if the link of each vertex is a flag complex. 
\end{defn}

\begin{thm}\label{npc}
 (Gromov) A cube complex $X$ is $\textup{CAT}(0)$ 
 if and only if it is nonpositively curved and simply connected. 
 Furthermore, $X$ admits a $CAT(-1)$ metric if and only 
 if it is $\textup{CAT}(0)$ and the link of each vertex satisfies the no $\square$-condition.
\end{thm}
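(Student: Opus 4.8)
The plan is to reduce both equivalences to a single local statement about vertex links, using the Cartan--Hadamard theorem. A cube complex built from finitely many shapes, with its piecewise Euclidean (resp.\ piecewise hyperbolic) metric, is a complete geodesic space, so by Cartan--Hadamard (see \cite{BridsonH}) it is globally $\textup{CAT}(0)$ (resp.\ $\textup{CAT}(-1)$) as soon as it is simply connected and \emph{locally} $\textup{CAT}(0)$ (resp.\ $\textup{CAT}(-1)$). Away from the vertices a cube complex is locally isometric to Euclidean (resp.\ hyperbolic) space, so the only curvature condition to verify is at the vertices, and a neighbourhood of a vertex $v$ is isometric to a neighbourhood of the cone point in the Euclidean (resp.\ hyperbolic) cone over $Lk(v,X)$. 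By the basic property of cones (Berestovskii's theorem; see \cite{BridsonH}), such a cone is $\textup{CAT}(0)$, resp.\ $\textup{CAT}(-1)$, precisely when $Lk(v,X)$ is $\textup{CAT}(1)$. So everything rests on \emph{Gromov's link lemma}: an all-right piecewise spherical simplicial complex is $\textup{CAT}(1)$ if and only if it is flag, together with a refinement sufficient to control the deformed links used in the curvature $-1$ case. Granting this, the forward implications of the theorem are routine: a $\textup{CAT}(0)$ (resp.\ $\textup{CAT}(-1)$) cube complex is contractible, hence simply connected, and locally $\textup{CAT}(0)$ (resp.\ $\textup{CAT}(-1)$), so its vertex links are $\textup{CAT}(1)$, hence flag; moreover an empty $4$-cycle in a link survives as a closed geodesic of length $2\pi$ in the hyperbolic cone over it, which is incompatible with $\textup{CAT}(-1)$, and this yields the no-$\square$ condition.

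The heart of the matter is the link lemma, and this is where I expect the real work. One proves it by induction on $\dim L$ using the local-to-global criterion for $\textup{CAT}(1)$: $L$ is $\textup{CAT}(1)$ iff it is locally $\textup{CAT}(1)$ and contains no closed geodesic of length less than $2\pi$. The link in $L$ of any simplex is again an all-right complex of strictly smaller dimension, flag if and only if $L$ is, so the inductive hypothesis disposes of local $\textup{CAT}(1)$-ness; and when $L$ is not flag a minimal non-spanning clique spans a full subcomplex isometric to the boundary of an all-right $k$-simplex, which carries a closed geodesic of length less than $2\pi$ (for an empty triangle, its three edges form a loop of length $3\pi/2$), and a standard argument (see \cite{Davis}, \cite{BridsonH}) promotes this to a violation of $\textup{CAT}(1)$ in $L$. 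The substantive step is the converse: a flag all-right complex has no closed geodesic of length less than $2\pi$. The main obstacle here is that such a geodesic need not lie in the $1$-skeleton; one must argue, using the rigid geometry of an all-right simplex --- it is an orthant of the round sphere, of diameter $\pi/2$ and with no pairs of conjugate points --- that the geodesic can be pushed onto the $1$-skeleton without increasing its length, reducing the problem to locally geodesic edge-loops. Such a loop visits at least four vertices (three consecutive edges force a turning angle of $\pi/2$, so no $3$-edge loop is locally geodesic), hence has length at least $4\cdot\tfrac{\pi}{2}=2\pi$, giving the claim.

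Finally, for the curvature $-1$ statement: given a $\textup{CAT}(0)$ cube complex whose vertex links satisfy the no-$\square$ condition, I would produce a piecewise hyperbolic metric by replacing each $n$-cube with a compact regular hyperbolic polytope combinatorially isomorphic to $[0,1]^n$ all of whose dihedral angles equal a common value $\alpha$ slightly less than $\tfrac{\pi}{2}$; one must check that the faces of these polytopes have matching sizes, so the face identifications remain isometries, and that for $\alpha\nearrow\tfrac{\pi}{2}$ the polytopes exist and vary continuously --- this is the one genuinely fussy piece of bookkeeping, carried out in \cite{Davis}. In this metric the link of a vertex is the same simplicial complex as before but with each simplex now a spherical simplex of dihedral angle $\alpha$; as in the second paragraph it is locally $\textup{CAT}(1)$ by induction on dimension, and flagness together with the no-$\square$ condition forces the shortest closed edge-loop that is locally geodesic to have at least five edges (a locally geodesic $4$-edge loop would be precisely a forbidden square), hence length at least $5\alpha$, which exceeds $2\pi$ once $\alpha$ is close enough to $\tfrac{\pi}{2}$; and the same straightening argument shows no shorter closed geodesic runs through simplex interiors. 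Thus each vertex neighbourhood is $\textup{CAT}(-1)$, $X$ is locally $\textup{CAT}(-1)$, and simple connectivity plus Cartan--Hadamard upgrades this to a global $\textup{CAT}(-1)$ metric, completing the proof.
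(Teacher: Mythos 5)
The paper does not prove this theorem: it is quoted as a known result of Gromov, with pointers to the surveys of Davis (\cite{Davis}, \cite{Davis2}) and Bridson--Haefliger, and then used as a black box in Sections 3 and 4. So there is no ``paper's proof'' to compare against. Your sketch follows the standard architecture from those references (Cartan--Hadamard to reduce to a local statement, Berestovskii's cone lemma to reduce the vertex condition to $Lk(v,X)$ being $\textup{CAT}(1)$, Gromov's link lemma to translate that into flagness, and Moussong's deformation of the regular cube to a hyperbolic polytope with dihedral angle $\alpha<\pi/2$ for the $\textup{CAT}(-1)$ half), and that architecture is correct and is exactly what you would find written out in \cite{BridsonH} and \cite{Davis2}.

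The one place I would push back is the way you dispose of the crucial step --- that a flag all-right spherical complex has no closed geodesic of length $<2\pi$. You assert that a closed geodesic ``can be pushed onto the $1$-skeleton without increasing its length.'' That statement is not obviously true, and it is not how the published proofs go: Bridson--Haefliger (II.5.18--5.20) argue directly about how a short closed geodesic traverses simplices, and Moussong's treatment (followed in Davis's Appendix I) sets up a rather delicate structure theory for geodesics in all-right pieces rather than a straightening move. As written, ``push to the $1$-skeleton'' is a heuristic that captures the expected answer ($4$ edges, each of length $\pi/2$, giving $2\pi$) but skips the genuinely hard analysis; if you want this paragraph to be a proof rather than a plausibility argument, you need to either carry out the combinatorial geodesic analysis or cite it. The same caveat applies to the parallel claim in your $\textup{CAT}(-1)$ paragraph that ``the same straightening argument shows no shorter closed geodesic runs through simplex interiors.'' Apart from that, the forward implications, the flag $\Rightarrow$ local $\textup{CAT}(1)$ induction, and the $\alpha\nearrow\pi/2$ deformation argument are all sound, and the plan correctly identifies where the real work lies.
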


The following is a characterization of $\textup{CAT(0)}$ metric spaces that are also
hyperbolic.\cite{Bridson}

\begin{thm}\label{flatplanes}
 (Gromov, Eberlin, Bridson) A $\textup{CAT}(0)$ metric space with a cocompact group of 
 isometries is 
 hyperbolic if and only if it does not contain isometrically embedded flat planes.
\end{thm}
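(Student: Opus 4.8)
The plan is to prove the two implications of Theorem~\ref{flatplanes} separately; only one of them uses the cocompact isometry group. For the easy direction, suppose $X$ contains a subset $P$ isometric to $\mathbb{E}^2$. Since a $\textup{CAT}(0)$ space is uniquely geodesic and an isometric embedding sends the $P$-geodesic between points $p,q\in P$ to a path of length $d_P(p,q)=d_X(p,q)$, that path is \emph{the} $X$-geodesic $[p,q]$; hence every geodesic triangle of $X$ with vertices in $P$ lies in $P$ and is a genuine Euclidean triangle. Taking equilateral triangles in $P$ of inradius $R$, one checks that the midpoint of each side lies at distance $\frac{3}{2}R$ from each of the other two sides, so for $\delta<\frac{3}{2}R$ the slim-triangles condition defining hyperbolicity fails; letting $R\to\infty$ shows that no $\delta$ works. (Cocompactness is not used here.)

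For the converse I argue by contraposition: assume $X$ is $\textup{CAT}(0)$, carries a cocompact isometry group, and is not hyperbolic, and I produce an isometrically embedded flat plane. First note that $X$ is proper: being complete and locally compact (automatic for the locally finite $\textup{CAT}(0)$ cube complexes to which we apply it), it has compact closed balls by the Hopf--Rinow theorem, which is what makes the limiting arguments below possible. Next, unwinding the negation of the slim-triangles condition, for every $n$ there is a geodesic triangle $\triangle(a_n,b_n,c_n)$ and a point $m_n$ on $[b_n,c_n]$ with $d(m_n,\,[a_n,b_n]\cup[a_n,c_n])\ge n$ and also $\min(d(m_n,b_n),\,d(m_n,c_n))\ge n$, the second clause being a routine strengthening (a point within $n$ of a vertex is within $n$ of an adjacent side). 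Using cocompactness, pick isometries $g_n$ of $X$ with $g_n m_n$ in a fixed compact set $K$ and replace each configuration by its $g_n$-translate, so that $m_n\in K$ for all $n$.

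Now I would feed these uniformly fat triangles into the $\textup{CAT}(0)$ rigidity machinery collected in \cite{BridsonH}: convexity of the distance function, the fact that nearest-point projection onto a convex set is $1$-Lipschitz and meets the set orthogonally, the Flat Strip Theorem, and above all the Flat Quadrilateral Theorem. The goal is to carve out of $\triangle(a_n,b_n,c_n)$ a geodesic quadrilateral $Q_n$ all of whose interior angles are $\ge\pi/2$ and both of whose pairs of opposite sides have length at least $c\,n$ for a fixed constant $c>0$; the Flat Quadrilateral Theorem then forces $Q_n$ to bound an isometrically embedded Euclidean rectangle $\mathcal{R}_n$ whose two side lengths tend to $\infty$. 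Because the $g_n$ keep the $\mathcal{R}_n$ anchored near $K$, the Arzel\`a--Ascoli theorem (using properness) lets one pass to a subsequence along which the $\mathcal{R}_n$ converge uniformly on compact sets; the limit is a convex subset of $X$ isometric to $\mathbb{E}^2$, since flatness --- equality in the $\textup{CAT}(0)$ comparison inequality --- is a closed condition and the rectangles grow without bound in both directions. This flat plane finishes the proof.

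The step I expect to be the main obstacle is exactly this middle one: converting a single fat geodesic triangle into flat pieces whose \emph{two} dimensions both grow linearly in $n$. If one is careless, the natural Arzel\`a--Ascoli limit of the three sides of $\triangle(a_n,b_n,c_n)$ collapses to a single geodesic line --- the two sides through $a_n$ run off to infinity relative to the anchored basepoint $m_n$ --- or one controls only one dimension and is left with a flat strip rather than a flat plane. Extracting genuine two-dimensional flatness is where the orthogonality of nearest-point projections and the $\textup{CAT}(0)$ angle inequalities must be combined with real care; everything else (the easy direction, properness, the translation trick, and the closedness of flatness under limits) is routine.
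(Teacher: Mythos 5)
The paper does not prove this theorem: it is stated in the preliminaries as a known result, with the reader referred to \cite{Bridson} (and cf.\ \cite{BridsonH}). So there is no argument of the author's to compare yours against; the only question is whether your sketch would itself constitute a proof.

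Your easy direction is fine: in a $\textup{CAT}(0)$ space geodesics are unique, so an isometrically embedded copy of $\mathbb{E}^2$ is geodesically convex, and your computation that the midpoint of a side of a Euclidean equilateral triangle of inradius $R$ lies at distance $\tfrac{3}{2}R$ from each of the other two sides kills $\delta$-slimness for every $\delta$. The converse, though, is an itinerary rather than a proof, and the hole is precisely the one you point at. Negating slimness, recentering by cocompactness, and invoking properness to run Arzel\`a--Ascoli are all routine. The pivot of the argument is to extract from a fat triangle $\triangle(a_n,b_n,c_n)$ a geodesic quadrilateral whose four Alexandrov angles are all $\ge\pi/2$ and whose \emph{two} side lengths both grow like $n$, so that the Flat Quadrilateral Theorem produces flat rectangles that do not degenerate in the limit. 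You announce this as a ``goal'' rather than carry it out. Nearest-point projection onto a convex set gives you two right angles for free, but not the other two, and without all four the Flat Quadrilateral Theorem does not fire; as you note yourself, a careless limit collapses to a geodesic line or a flat half-strip. That pivot is the whole mathematical content of Bridson's theorem, so what you have written is an honest outline of where a proof would have to go, not a proof. One smaller point: the theorem needs a properness (local compactness) hypothesis for the compactness machinery; you flag this and it does hold for the finite-dimensional cube complexes used in the paper, but it should be stated if the theorem is to be proved rather than cited. Treating the Flat Plane Theorem as a black box, as the paper does, is the right call.
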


\subsection{Topological finiteness properties of groups.}

The classical finiteness properties of groups are that of 
being finitely generated and finitely presented. 
These notions were generalized by C.T.C. Wall \cite{Wall}. 
In this paper we are concerned with the properties \emph{type $F_n$} 
and \emph{type $FP_{n}$}. 
These properties are quasi-isometry invariants of groups \cite{Alonso}. 
In order to discuss the property type $F_n$ 
first we need to define Eilenberg-Maclane complexes.

\begin{defn}
  An Eilenberg-Maclane complex for a group $G$, 
  or a $K(G,1)$, is a connected CW-complex $X$ such that 
  $\pi_1(X)=G$ and $\widetilde{X}$ is contractible.
 \end{defn}
 
 It is a fact that for any group $G$, 
 there is an Eilenberg-Maclane complex $X$ which is unique up to homotopy type.
 A group is said to be \emph{of type $F_n$} if it has an 
 Eilenberg Maclane complex with a finite $n$-skeleton.  
 Clearly, a group is finitely generated if and only if it is of type $F_1$, 
 and finitely presented if and only if it is of type $F_2$.
 (For more details see \cite{Geoghegan}.)
 
Torsion-free hyperbolic groups are of 
type $F_{\infty}$, which means that they are of type $F_n$ 
for all $n\in \mathbb{N}$. 
This follows from the following result of Rips that appears in \cite{Rips}.

\begin{thm}
 (Rips) Let $H$ be a hyperbolic group. Then there exists a locally finite, simply connected, 
 finite dimensional simplicial complex on which $H$ acts faithfully, 
 properly, simplicially and cocompactly. 
 In particular, if $H$ is torsion free, then the action is free and
 the quotient of this complex by $H$ is a finite Eilenberg-Maclane complex $K(H,1)$.
\end{thm}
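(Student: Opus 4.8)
The plan is to realize the required complex as the \emph{Rips complex} $P_R(H)$ for a parameter $R$ chosen large relative to a hyperbolicity constant of $H$. Fix a finite symmetric generating set $A$ for $H$, let $d$ be the word metric on $H$, and let $\delta$ be a hyperbolicity constant for the Cayley graph $\Gamma_{H,A}$. For $R\ge 0$ define $P_R(H)$ to be the simplicial complex with vertex set $H$ in which a finite subset $\{h_0,\dots,h_k\}\subseteq H$ spans a $k$-simplex exactly when $d(h_i,h_j)\le R$ for all $i,j$. Since every left translation of $H$ is an isometry of $(H,d)$, left multiplication extends to a simplicial action of $H$ on $P_R(H)$. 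First I would dispatch the routine properties: $P_R(H)$ is finite dimensional because any simplex has at most $\abs{B(e,R)}<\infty$ vertices; it is locally finite because the simplices containing a fixed vertex $h$ have all their vertices in the finite ball $B(h,R)$; the action is cocompact because every simplex lies in the $H$-orbit of one whose vertex set contains $e$ and is therefore contained in $B(e,R)$, of which there are only finitely many; and, $P_R(H)$ being locally finite, the action is faithful and proper with trivial stabilizers because left multiplication of $H$ on itself is free.

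The substantive point is that $P_R(H)$ is contractible for all $R$ sufficiently large relative to $\delta$. I would prove this by showing $\pi_n(P_R(H))=0$ for every $n$, whence contractibility follows from the Hurewicz and Whitehead theorems since $P_R(H)$ is a connected CW complex. Given a map $S^n\to P_R(H)$, simplicial approximation reduces it to a simplicial map from a finite triangulated $n$-sphere into a finite subcomplex $K\subseteq P_R(H)$. Fix the basepoint $e\in H$ and put $N=\max\{d(e,v):v\in K^{(0)}\}$. If $N\le R/2$ then all pairwise distances among $\{e\}\cup K^{(0)}$ are $\le R$, so the simplicial cone on $K$ with apex $e$ lies in $P_R(H)$ and $K$ is null-homotopic there. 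Otherwise I would contract $K$ radially toward $e$: replace each vertex $v$ of $K$ with $d(e,v)$ maximal by a vertex $\rho(v)$ lying on a fixed geodesic $[e,v]$, a bounded amount (depending only on $\delta$) closer to $e$, leaving the remaining vertices of $K$ fixed. Here $\delta$-hyperbolicity enters: two geodesics issuing from $e$ fellow-travel, to within $O(\delta)$, until they reach distance about $\tfrac12 d(v,w)$ from $e$, so pushing vertices toward $e$ by more than the relevant multiple of $\delta$ keeps all pairwise distances among the pushed and unpushed vertices of each simplex of $K$ at most $R$. Consequently $\rho$ is a simplicial map $K\to P_R(H)$, and the standard prism construction supplies a homotopy inside $P_R(H)$ from the inclusion to $\rho$. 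The image of $\rho$ has strictly smaller value of $N$, so iterating drives $N$ down into the coning range and $K$ becomes null-homotopic in $P_R(H)$.

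For the final assertion, suppose $H$ is torsion free. Then the action of $H$ on $P_R(H)$ is free: a nontrivial element fixing a simplex would permute that simplex's finite vertex set and hence have finite order, while left multiplication fixes no vertex at all. Passing if necessary to the barycentric subdivision so that the orbit space is again a CW (indeed simplicial) complex, the projection $P_R(H)\to P_R(H)/H$ is then a covering map; since $P_R(H)$ is contractible, $P_R(H)/H$ is aspherical with fundamental group $H$, i.e.\ a $K(H,1)$, and it is a finite complex because the action is cocompact. Thus torsion-free hyperbolic groups are of type $F_\infty$.

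I expect the main obstacle to be the radial-contraction step: ensuring that the push map $\rho$ honestly lands in $P_R(H)$ and that the prism homotopy never leaves $P_R(H)$. This is precisely where $\delta$-hyperbolicity is used in an essential way --- through the fellow-traveling estimate for geodesics sharing an endpoint --- and it is what forces the lower bound on $R$ in terms of $\delta$; once that estimate is set up with the correct constants, the remaining verifications are routine bookkeeping.
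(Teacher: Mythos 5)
The paper does not actually prove this statement: it records the theorem as a known result of Rips (with a citation) and uses it as a black box to conclude that torsion-free hyperbolic groups are of type $F_\infty$. So there is no internal argument to compare against. Your proposal is the standard Rips-complex proof --- take $P_R(H)$ for $R$ large relative to $\delta$, reduce contractibility to the vanishing of all homotopy groups via simplicial approximation, and collapse a finite subcomplex radially toward the basepoint using fellow-traveling of geodesics --- and on the merits it is correct.

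Two points deserve more care. First, your claim that the action has ``trivial stabilizers'' is wrong for general hyperbolic $H$: vertex stabilizers are trivial since left multiplication on $H$ is free, but a finite-order element can stabilize a simplex $\{h_0,\dots,h_k\}$ setwise by permuting the $h_i$. What the theorem asserts, and what you actually need, is faithfulness and properness; properness holds because any simplex stabilizer injects into the (finite) symmetric group on that simplex's vertices. You recover a free action correctly later by invoking torsion-freeness, so the slip is local. Second, the radial push must be carried out with explicit constants: the push distance $k$ needs to be at least a fixed multiple of $\delta$ so that for a pushed vertex $v'$ and an unpushed $w$ lying in a common simplex of $K$ one still has $d(v',w)\le R$ (and similarly for pairs of pushed vertices, via a Gromov-product estimate), and $k$ must also be bounded above --- roughly by $R/2$ --- so that the prism sets $\{v_0,\dots,v_i,\rho(v_i),\dots,\rho(v_k)\}$ have diameter at most $R$ and the straight-line homotopy stays inside $P_R(H)$. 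This is precisely where the hypothesis $R \ge 4\delta + c$ enters and is the only genuinely non-routine step; you gesture at it, but the inequalities should be written down.
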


\subsection{Homological finiteness properties of groups.}

For a group $G$, consider the group ring $\mathbb{Z}G$.
We view $\mathbb{Z}$ as a $\mathbb{Z}G$ module where the action of $G$ is trivial,
i.e. $g\cdot 1=1$ for every $g\in G$.
A module is called \emph{projective} if it is the direct summand of a free module.
The group $G$ is said to be of type $FP_n$ 
if there is a projective $\mathbb{Z}G$-resolution (an exact sequence):
$$...\to P_n\to P_{n-1}\to...\to P_0\to \mathbb{Z}$$
of the trivial $\mathbb{Z}G$ module $\mathbb{Z}$ such that for each $1\leq i\leq n$ 
$P_i$ is finitely generated as a $\mathbb{Z}G$ module.

A group is of type $FP_1$ if and only if it is finitely generated.
However, there are examples of groups that are of type $FP_2$ but not
finitely presented.
If a group is of type $F_n$ then it is of type $FP_n$.
In general for $n>1$ type $FP_n$ does not imply type $F_n$.
(There are examples due to Bestvina-Brady \cite{BeBr}.)
However, whenever a group is finitely presented and of type $FP_n$, it is also 
of type $F_n$.
For a detailed exposition about homological finiteness properties, we refer
the reader to \cite{Geoghegan}.

\subsection{Bestvina-Brady Morse theory}

Here we shall sketch the main tool used in this paper. 
Bestvina--Brady Morse theory was introduced in \cite{BeBr} 
to study finiteness properties of subgroups of certain 
right angled Artin groups. Morse theory is defined more generally for affine cell complexes,
but we shall only discuss the special case of piecewise euclidean cube complexes.

Let $X$ be a piecewise Euclidean cube complex. 
Let $G$ act freely, properly, cocompactly, cellularly and by isometries on $X$.
Let $\mathbb{Z}$ act on $\mathbb{R}$ in the usual way. 
Let $\phi:G\to \mathbb{Z}$ be a homomorphism, and 
let $H=Ker(\phi)$.
We fix these assumptions for the rest of this subsection.

\begin{defn}
 A $\phi$-equivariant Morse function is a map $f:X\to \mathbb{R}$ that satisfies,
 
 \begin{enumerate}
  \item For any cell $F$ of $X$ (with the characteristic function $\chi_F:\square^n\to F$),
   the composition $f\circ \chi_F$ is the restriction of a
   nonconstant affine map $\mathbb{R}^n\to \mathbb{R}$.
  
  \item The image of the $0$-skeleton is a discrete subset of $\mathbb{R}$.
  
  \item $f$ is $G$-equivariant, i.e for all $g\in G, x\in X$, $f(g\cdot x)=\phi(g)\cdot f(x)$. 
 \end{enumerate}
 
 \end{defn}
 
 One can think of a Morse function as a height function on $X$. The kernel 
 $H=Ker(\phi)$ acts on the level sets of $X$, i.e. 
 inverse images $f^{-1}(x)$ for $x\in \mathbb{R}$.
 Topological properties of level sets can be used to deduce the finiteness properties of $H$. 
 In \cite{BeBr} it was shown
 that the topological properties of the level sets are determined 
 by the topology of links of vertices. We make this precise below.
 
\begin{defn}
 Given a vertex $v$ of $X$, 
 the ascending link $Lk^{\uparrow}(v,X)$ is defined as 
 $$Lk^{\uparrow}(v,X)=\bigcup \{Lk(v,F)\mid v\textup{ is the minimum of }f\circ \chi_F\}$$
 
 Similarly, the descending link $Lk^{\downarrow}(v,X)$ is defined as 
 $$Lk^{\downarrow}(v,X)=\bigcup \{Lk(v,F)\mid v\textup{ is the maximum of }f\circ \chi_F\}$$
\end{defn}

We now summarize the main result of Bestvina--Brady Morse theory. 
For the details of the proof see \cite{BeBr} or \cite{Brady}.

\begin{thm}\label{mt}
(Bestvina, Brady)
Consider the situation described above.
Then the following holds:

\begin{enumerate}
 \item If for every vertex $v\in X$, $Lk^{\uparrow}(v,X),Lk^{\downarrow}(v,X)$ 
 are simply connected, then $H$ is finitely presented. 
 
 \item If for every vertex $v\in X$, 
 $\widetilde{H}_k(Lk^{\uparrow}(v,X),\mathbb{Z}),
 \widetilde{H}_k(Lk^{\downarrow}(v,X),\mathbb{Z})=0$ 
 for every $k$ such that either $k=n+1$ or $1\leq k\leq n-1$, and 
$\widetilde{H}_n(Lk^{\uparrow}(v,X),\mathbb{Z}),\widetilde{H}_n(Lk^{\downarrow}(v,X),\mathbb{Z})
\neq 0$ then $H$ is of type $FP_n$ but not of type $FP_{n+1}$.
\end{enumerate}
\end{thm}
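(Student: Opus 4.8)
\medskip
\noindent\emph{Proof strategy.}\ The plan is to reduce everything to a \emph{Morse Lemma} describing how the sublevel sets $X^{\le t}=f^{-1}((-\infty,t])$ and the slabs $X_{[a,b]}=f^{-1}([a,b])$ evolve as the parameters vary, and then feed the filtration $X=\bigcup_{s>0}X_{[-s,s]}$ into Brown's criterion. I assume $X$ is contractible (e.g.\ $\CAT(0)$): this holds in all of this paper's applications and is the standard setting for the theory. After a subdivision I may also assume that for a non-vertex value $t$ the level set $f^{-1}(t)$, and each slab with non-vertex endpoints, is a genuine subcomplex; since $f$ is $\phi$-equivariant and $H=\ker\phi$, the group $H$ preserves every slab and acts on $X_{[-s,s]}$ freely, properly, and --- because $G$ acts cocompactly and $[-s,s]$ is compact --- cocompactly. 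The Morse Lemma of \cite{BeBr} then says: if $(a,b)$ contains exactly one vertex value, namely $f(v)$, then $X^{\le b}$ is homotopy equivalent to $X^{\le a}$ with a cone on $Lk^{\downarrow}(v,X)$ glued in along $Lk^{\downarrow}(v,X)$, and symmetrically lowering the floor glues on a cone on the ascending link. One proves this cube-by-cube --- on each cube $f$ is affine, so the slice of a star between two heights deformation retracts, by straight-line homotopies inside the cubes, onto a truncated cone on the ascending or descending link --- and then assembles the local retractions over the discretely many vertex values. The upshot is that for $s<s'$ the complex $X_{[-s',s']}$ is obtained from $X_{[-s,s]}$, up to homotopy, by attaching a cone on $Lk^{\uparrow}(v,X)$ or on $Lk^{\downarrow}(v,X)$ for each vertex $v$ whose height lies in the enlarged range.

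Attaching a cone on $L$ along $L$ affects homology only through $H_k\big(Y\cup\mathrm{Cone}(L),\,Y\big)\cong\widetilde{H}_{k-1}(L)$, and affects $\pi_1$ only when $L$ is disconnected or not simply connected (van Kampen). So in case (1), if every ascending and descending link is simply connected, then every bonding map $X_{[-s,s]}\hookrightarrow X_{[-s',s']}$ is a $\pi_1$-isomorphism; since $\pi_1$ commutes with direct limits, $\varinjlim\pi_1(X_{[-s,s]})=\pi_1(X)=1$, and the bonds are isomorphisms, each $X_{[-s,s]}$ is simply connected once $s$ is large enough for it to be connected --- and a group acting freely, cellularly, and cocompactly on a simply connected CW complex is finitely presented. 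For the positive half of (2), the hypothesis $\widetilde{H}_k(L)=0$ for every link $L$ and $1\le k\le n-1$ makes each bond a $\widetilde{H}_k$-isomorphism for $k\le n-1$; the same ``isomorphisms with vanishing colimit'' argument makes each $X_{[-s,s]}$ (for $s$ large) $(n-1)$-acyclic, so its cellular chain complex over $\mathbb{Z}H$ --- finitely generated in each degree because the action is free and cocompact, and exact through degree $n-1$ --- is a partial free resolution of $\mathbb{Z}$, whence $H$ is of type $FP_n$.

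For the negative half of (2) I would use Brown's criterion (see \cite{Geoghegan}): $H$ is of type $FP_{n+1}$ if and only if the direct system $\big(\widetilde{H}_n(X_{[-s,s]})\big)_s$ is essentially trivial. By the Morse Lemma the degree-$n$ bonds are surjective (coning off links $L$ with $\widetilde{H}_{n-1}(L)=0$), so essential triviality would force $\widetilde{H}_n(X_{[-s,s]})=0$ for all large $s$; it therefore suffices to show this group is nonzero for all $s$. Contractibility of $X$ identifies $\widetilde{H}_n(X_{[-s,s]})\cong H_{n+1}(X,X_{[-s,s]})$ through the connecting map, and collapsing $X_{[-s,s]}$ splits the latter as $H_{n+1}\big(X^{\ge s},f^{-1}(s)\big)\oplus H_{n+1}\big(X^{\le-s},f^{-1}(-s)\big)$. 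Building each end up from its bounding level set by coning off one link $L$ at a time --- each satisfying $\widetilde{H}_{n-1}(L)=\widetilde{H}_{n+1}(L)=0$ while $\widetilde{H}_n(L)\ne0$ for infinitely many of them (by $G$-equivariance, from the single link with nonvanishing $\widetilde{H}_n$) --- one checks along the long exact sequences of the pairs that these relative groups accumulate the summands $\widetilde{H}_n(L)$ without cancellation, hence are nonzero (in fact not finitely generated). Thus $\widetilde{H}_n(X_{[-s,s]})\ne0$ for every $s$, the system is not essentially trivial, and $H$ is not of type $FP_{n+1}$; moreover when the links are simply connected, as in this paper's construction, (1) applies too, so $H$ is also finitely presented, hence of type $F_n$ but not $F_{n+1}$.

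The crux is this last ``no cancellation'' step, the only place the homological hypotheses on the links --- in particular the vanishing of $\widetilde{H}_{n+1}$ --- are used in full, to ensure the $\widetilde{H}_n$ classes produced by the infinitely many conings do not kill one another. When the links are spheres $S^n$, as engineered here, it is transparent, because each coning-off contributes a single free summand $\mathbb{Z}$; the general statement requires more bookkeeping with the long exact sequences. A secondary nuisance is that level sets and slabs are not literally subcomplexes, which is what forces the subdivisions above and some care in matching up the cone attachments in the Morse Lemma.
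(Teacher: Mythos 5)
The paper itself does not prove Theorem~\ref{mt}; it explicitly defers, writing ``For the details of the proof see \cite{BeBr} or \cite{Brady}.'' So there is no in-paper proof against which to measure your sketch --- what you have written is a blind reconstruction of the Bestvina--Brady argument, and it is in fact the standard route (Morse Lemma $\Rightarrow$ slab filtration $\Rightarrow$ Brown's criterion), matching the cited sources in structure.

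On the substance: the outline is sound, and the pieces (reduction to cocompact $H$-invariant slabs, cone attachments governed by ascending/descending links, van Kampen for part (1), Brown's criterion for part (2)) are the right ones. One place you could tighten is the part you label the ``no cancellation'' crux. You frame it as bookkeeping with long exact sequences that is ``transparent'' only when the links are spheres, but the clean general statement drops out directly from the long exact sequence of the \emph{triple}: if $Y_0\subseteq Y_{i-1}\subseteq Y_i$ with $Y_i=Y_{i-1}\cup_{L_i}\mathrm{Cone}(L_i)$, then $H_{n+2}(Y_i,Y_{i-1})\cong\widetilde{H}_{n+1}(L_i)=0$ forces $H_{n+1}(Y_{i-1},Y_0)\hookrightarrow H_{n+1}(Y_i,Y_0)$ to be injective, while $H_{n+1}(Y_1,Y_0)\cong\widetilde{H}_n(L_1)\neq0$ seeds a nonzero class. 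So $H_{n+1}(X^{\geq s},f^{-1}(s))$ is nonzero with no freeness assumption; the hypothesis $\widetilde{H}_{n+1}(L)=0$ is exactly what makes ``no cancellation'' automatic. This is worth making explicit, since it is the only load the $\widetilde{H}_{n+1}$ hypothesis carries. Two smaller points: Brown's criterion formally asks for essential triviality of the direct systems $\widetilde{H}_k$ for \emph{all} $k\leq n$, not just $k=n$; the lower degrees are already handled by the $FP_n$ half, so your application is fine, but say so. And the surjectivity of the degree-$n$ bonds rests on $\widetilde{H}_{n-1}(L)=0$, which the stated hypotheses only give for $n\geq2$; for this paper's uses ($n=1$ in Section~3, $n=2$ in Section~4) that is harmless, but the sketch as written should not be read as covering $n=1$ without comment.
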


We remark that if a group is finitely presented and of type $FP_n$,
then it is of type $F_n$.
(See \cite{Geoghegan}.)

\section{A $\textup{CAT}(-1)$ example}

In this section we produce a 
square complex $X$ with a map 
$f:X\to S^1$ which lifts to an $f_*$-equivariant 
Morse function $\widetilde{f}:\widetilde{X}\to \mathbb{R}$
with the properties: 

\begin{enumerate}
 \item The link of each vertex is a finite graph with no $3$ or $4$ cycles,

 \item The ascending and descending links of all vertices are homeomorphic to $S^1$.
 \end{enumerate}
 
 First we define a bipartite graph $\Gamma$ which will be an
 ingredient in both constructions.
 
 \begin{defn}\label{Gamma}
 $\Gamma$ is the following graph:
 
  The vertex set: $V(\Gamma)=A_+\cup A_-\cup B_-\cup B_+$ where,
  \begin{enumerate}
   \item $A_+=\{a_0^+,a_1^+,...,a_{10}^+\}$.
   \item $A_-=\{a_0^-,a_1^-...,a_{10}^-\}$.
   \item $B_+=\{b_0^+,b_1^+,...,b_{10}^+\}$.
   \item $B_-=\{b_0^-,b_1^-,...,b_{10}^-\}$.
  \end{enumerate}
 The edge set: $E(\Gamma)=E_1\cup E_2\cup E_3$ where
 \begin{enumerate}
 
 \item $E_1$ consists of the edges $\{a_i^s,b_j^s\}$ for $s\in \{+,-\}$ 
 and $i=j$ or $j=i+1(\textup{mod }11)$. 
 
 \item $E_2$ consists of the edges $\{a_i^+,b_j^-\}$ for 
 $j=i+3(\textup{mod }11)$ or $j=i+5(\textup{mod }11)$.
 
 \item $E_3$ consists of the edges $\{a_i^-,b_j^+\}$ for $i=j$ or $j=i+2(\textup{mod }11)$.
 
 \end{enumerate}
 \end{defn}

 \begin{lemma}\label{graph}
  $\Gamma$ satisfies the following:
  \begin{enumerate}
   \item The subgraphs spanned by the vertex sets $A_+\cup A_-$ and $B_+\cup B_-$ have no edges, 
 and $A_+\cup B_+$, $A_+\cup B_-$, $A_-\cup B_+$, $A_-\cup B_-$ 
 span subgraphs that are each a cycle.
\item There are no $3$-cycles or $4$-cycles.
  \end{enumerate}
  
 \end{lemma}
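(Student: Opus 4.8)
\emph{Proof plan.}
The plan is to exploit the bipartite structure of $\Gamma$, deducing (1) from a degree count and the ``no $4$-cycle'' half of (2) from a short analysis of common neighbourhoods.

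First I would note that every edge of $E_1\cup E_2\cup E_3$ joins a vertex of $A:=A_+\cup A_-$ to a vertex of $B:=B_+\cup B_-$, so $\Gamma$ is bipartite with parts $A$ and $B$. This already gives the first half of (1) (no edges inside $A_+\cup A_-$ or inside $B_+\cup B_-$) and, since bipartite graphs contain no odd cycles, the ``no $3$-cycle'' half of (2). For the four ``cycle'' assertions in (1), observe which edge families survive in each induced subgraph: $A_+\cup B_+$ and $A_-\cup B_-$ retain exactly the $s=+$ and $s=-$ halves of $E_1$, while $A_+\cup B_-$ retains exactly $E_2$ and $A_-\cup B_+$ retains exactly $E_3$. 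In each of these four subgraphs every vertex has degree exactly $2$ (each $a$-vertex is prescribed two $b$-neighbours and conversely, the two relevant indices being distinct modulo $11$), so the subgraph is a disjoint union of cycles; it is a single $22$-cycle because the composite of two consecutive edges along the cycle shifts the index by a fixed nonzero amount — by $\pm1$ for the $E_1$ pieces, by $\pm2$ for the $E_3$ piece, and by $\pm2$ for the $E_2$ piece after composing its two edge types — and $11$ being prime this shift runs through all eleven indices before the cycle closes up.

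It remains to rule out $4$-cycles. By bipartiteness a $4$-cycle is precisely a pair of distinct $A$-vertices with two common neighbours in $B$, so I would record
$$N(a_i^+)=\{\,b_i^+,\ b_{i+1}^+,\ b_{i+3}^-,\ b_{i+5}^-\,\},\qquad N(a_i^-)=\{\,b_i^+,\ b_{i+2}^+,\ b_i^-,\ b_{i+1}^-\,\}$$
(indices mod $11$) and go through the three cases $\{a_i^+,a_j^+\}$, $\{a_i^-,a_j^-\}$, $\{a_i^+,a_j^-\}$. In each case the two $B_+$-parts of the neighbourhoods are two-element sets with distinct ``base points,'' hence meet in at most one vertex, and a common $B_+$-neighbour forces the difference $j-i$ into a small residue set; the same holds for the $B_-$-parts with a different residue set; and the key point is that these two residue sets are disjoint modulo $11$ — the differences needed for a common $B_+$-neighbour and for a common $B_-$-neighbour are incompatible (roughly $\pm1$ versus $\pm2$ in the two like-superscript cases, and $\{-2,-1,0,1\}$ versus $\{2,3,4,5\}$ in the mixed case), which holds because $11$ is large enough to keep these sets apart. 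Hence two distinct $A$-vertices share at most one neighbour, and $\Gamma$ has no $4$-cycle.

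The only real work is this last step, and the main (mild) obstacle is the case bookkeeping in it: the graph is engineered precisely so that the modulus $11$ is just large enough to keep the index differences coming from $E_1$, $E_2$, and $E_3$ from colliding, so the content of the lemma is exactly the verification that they never collide in a way that would close up a square. Everything else — bipartiteness, the degree-$2$ count, and the cyclic-shift argument for (1) — is routine.
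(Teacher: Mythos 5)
Your proof is correct, and it follows the same basic strategy as the paper (bipartiteness kills $3$-cycles; mod-$11$ index arithmetic kills $4$-cycles), but your organization of the $4$-cycle argument is genuinely tidier. The paper assumes a $4$-cycle $C$ and splits into five cases according to which of the three-part unions $A_+\cup A_-\cup B_\pm$, $A_\pm\cup B_+\cup B_-$ contain $C$, plus a fifth case with one vertex in each of $A_+,A_-,B_+,B_-$, then grinds out the index congruences (explicitly treating only cases $1$ and $5$, with the ``degenerate'' distributions $2{+}2$ inside a two-part union quietly falling back on statement $(1)$). You instead observe that in a bipartite graph a $4$-cycle is exactly a pair of distinct $A$-vertices with two common $B$-neighbors, write down $N(a_i^+)$ and $N(a_i^-)$ once, and reduce to three cases $\{a_i^+,a_j^+\}$, $\{a_i^-,a_j^-\}$, $\{a_i^+,a_j^-\}$; the split between common $B_+$-neighbors and common $B_-$-neighbors then absorbs the paper's sub-cases automatically, and the contradiction is the disjointness (mod $11$) of the two residue sets of differences. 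That is fewer cases and makes the role of the modulus transparent. You also make part $(1)$ more explicit than the paper (which calls it ``easily verified''): the degree-$2$ count gives a disjoint union of cycles, and the fixed nonzero shift ($\pm1$ or $\pm2$, coprime to $11$) shows it is a single $22$-cycle. Everything checks out; the only thing I'd tighten in a final write-up is to actually display, in the mixed case, that $\{-2,-1,0,1\}$ and $\{2,3,4,5\}$ are disjoint mod $11$ (and that the two $B_+$-neighbor sets cannot coincide because one is a consecutive pair and the other differs by $2$), since that is where the ``$11$ is large enough'' hypothesis is really used.
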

 
 \begin{proof}
 $\Gamma$ is a bipartite graph so there are no $3$-cycles. 
Property $(1)$ in the statement of the lemma is easily verified. 
We claim that $\Gamma$ has no $4$-cycles. 
Assume there is a $4$-cycle $C$. 
There are five cases to check. (We write a cycle as a set of edges.)

\begin{enumerate}
 \item $C$ lies in the subgraph spanned by $A_+\cup A_-\cup B_-$. 
 
 So $C$ is of the form $\{\{a_i^+,b_j^-\},\{a_k^-,b_j^-\},\{a_k^-,b_l^-\},\{a_i^+,b_l^-\}\}$.
 
 \item $C$ lies in the subgraph spanned by $A_+\cup A_-\cup B_+$. 
 
 So $C$ is of the form $\{\{a_i^+,b_j^+\},\{a_k^-,b_j^+\},\{a_k^-,b_l^+\},\{a_i^+,b_l^+\}\}$.
 
 \item $C$ lies in the subgraph spanned by $B_+\cup B_-\cup A_-$. 
 
 So $C$  is of the form $\{\{a_j^-,b_i^+\},\{a_j^-,b_k^-\},\{a_l^-,b_k^-\},\{a_l^-,b_i^+\}\}$.
 
 \item $C$ lies in the subgraph spanned by $B_+\cup B_-\cup A_+$. 
 
 So $C$ is of the form $\{\{a_j^+,b_i^+\},\{a_j^+,b_k^-\},\{a_l^+,b_k^-\},\{a_l^+,b_i^+\}\}$.
 
 \item The vertices of $C$ $\{v_1,...,v_4\}$ satisfy $v_1\in A_+,v_2\in B_+,v_3\in A_-,v_4\in B_-$. 
 
 So $C$ is of the form $\{\{a_i^+,b_j^+\},\{a_k^-,b_j^+\},\{a_k^-,b_l^-\},\{a_i^+,b_l^-\}\}$.
 
\end{enumerate}

We treat cases $(1)$ and $(5)$. Cases $(2),(3),(4)$ are similar to $(1)$.

{\bf Case $(1)$}:
Since the distinct vertices $b_l^-,b_j^-$ share a neighbor $a_k^-$, 
we can assume without the loss of generality that $l=k$ and $j=k+1(\textup{mod }11)$. 
So $|i-j|=\pm 1(\textup{mod }11)$.
Now either $l=i+3(\textup{mod }11)$ and $j=i+5(\textup{mod }11)$, 
or $l=i+5(\textup{mod }11)$ and $j=i+3(\textup{mod }11)$. 
In either case we conclude that $|i-j|=\pm 2(\textup{mod }11)$.
This is a contradiction. Therefore such a $4$-cycle cannot exist.
 
 {\bf Case $(5)$}: By construction we observe,
 \begin{enumerate}
  \item $j=i$ or $j=i+1(\textup{mod }11)$.
  
  \item $j=k$ or $j=k+2(\textup{mod }11)$.
  
  \item $l=k$ or $l=k+1(\textup{mod }11)$.
  
  \item $l=i+3(\textup{mod }11)$ or $l=i+5(\textup{mod }11)$.
 \end{enumerate}
 From $(1),(2),(3)$ above we deduce that if $|l-i|\cong n(\textup{mod }11)$, 
 then $n\in \{-2,-1,0,1,2\}$. 
 From $(4)$ on the other hand $|l-i|\cong 3\textup{ or }5(\textup{mod }11)$. 
 This is a contradiction. 
 Therefore we conclude that there are no $4$-cycles in $\Gamma$.
\end{proof}

We remark that in the above construction each set $A_+,A_-,B_+,B_-$ has cardinality $11$,
but for any prime greater than $11$ we can obtain similar constructions.
However, $11$ is the smallest number for which we are able to do such a construction.
 Now we will define our square complex $X$.
 Let $\Theta_1$ be a graph with vertices $v_1,v_2$ 
 and the edge set $E(\Theta_1)=A_-\cup A_+\subseteq V(\Gamma)$, 
 such that each edge meets $v_1$ and $v_2$. 
 The $A_-$ edges are oriented in the direction of $v_2$ 
 (i.e. the arrow points to $v_2$) and the $A_+$ edges 
 are oriented in the direction of $v_1$. 
 Similarly, let  $\Theta_2$ be a graph with vertices $v_3,v_4$,
 and the edge set $E(\Theta_2)=B_-\cup B_+\subseteq V(\Gamma)$. 
 The $B_-$ edges are oriented in the direction 
 of $v_4$ and the $B_+$ edges are oriented in the direction of $v_3$.

Consider the $2$-complex $J=\Theta_1\times \Theta_2$. 
The squares of $J$ are ordered pairs $(a,b)$ 
where $a\in A_+\cup A_-, b\in B_+\cup B_-$. 
We now define a subcomplex $X$ of $J$ in the following manner. 
Let $X^{(1)}=J^{(1)}$. Glue a $2$-face along 
the boundary of every square in $X^{(1)}$ that has 
the property that the corresponding pair $\{a,b\}$ is an edge in $\Gamma$. 
The resulting square complex is $X$.

Identify $S^1$ with $\mathbb{R}/\mathbb{Z}$.
The orientations on the edges of $\Theta_1,\Theta_2$ determine maps $l_i:\Theta_i\to S^1$. 
Under this map each vertex maps to $[0]$, 
and the map on each edge is as follows:
Identify the edge with the unit interval $[0,1]$ 
in a way that the edge is oriented towards the vertex identified with $1$. 
Then define the map $l_i$ on an edge as $x\mapsto [x]$.
Now identify each square $C$ of $X$ isometrically with the unit square $[0,1]^2$, 
where the edges $\{v\}\times [0,1]$ and $[0,1]\times \{v\}$  
are oriented towards $(v,1)$ and $(1,v)$ respectively for each $v\in \{0,1\}$. 
The map $f$ on $C$ is now defined as $(x,y)\mapsto [x+y]$.

The map $f_{\ast}:\pi_1(X)\to \pi_1(S^1)$ is the induced map on the fundamental groups, 
and $\pi_1(X)$ acts on the universal cover $\widetilde{X}$ by deck transformations. 
$f$ lifts to a map $\widetilde{f}:\widetilde{X}\to \mathbb{Z}$, 
which is a $f_{\ast}$-equivariant Morse function on $\widetilde{X}$. 
Conditions $(1),(2),(3)$ of the definition of
a Morse function are apparent, and condition $(4)$ 
follows from the definition of the lift $\widetilde{f}$.

\begin{thm}
 The square complex $X$ has the following properties.
 \begin{enumerate}
\item $\widetilde{X}$ admits a $\textup{CAT}(-1)$ metric.
\item $\textup{Ker}(f_*:\pi_1(X)\to \pi_1(S^1))$ is finitely generated but not finitely presented.
\end{enumerate}
\end{thm}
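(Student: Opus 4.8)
The plan is to verify the two conclusions separately, using the machinery already set up. For part (1), I would apply Theorem~\ref{npc}: it suffices to show that $\widetilde{X}$ is simply connected (immediate, since it is a universal cover) and that the link of every vertex in $X$ is a flag simplicial complex with no empty squares (the no-$\square$-condition). Since $X$ is a subcomplex of the product $J=\Theta_1\times\Theta_2$, each vertex of $X$ is a pair $(v_i,v_j)$ with $i\in\{1,2\}$, $j\in\{3,4\}$, so there are exactly four vertices. The link of such a vertex is built from the edges of $\Theta_1$ (respectively $\Theta_2$) incident to $v_i$ (respectively $v_j$) — all of them, since $\Theta_1,\Theta_2$ are ``theta-like'' graphs with both vertices on every edge — together with a $1$-cell $\{a\}\times\{b\}$ precisely when $\{a,b\}$ is an edge of $\Gamma$. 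The key observation is that, up to the relabelling that sends an edge oriented towards $v_i$ to a ``$+$'' or ``$-$'' vertex of the link, the link of $(v_i,v_j)$ is exactly the bipartite subgraph of $\Gamma$ spanned by one of $A_+\cup B_+$, $A_+\cup B_-$, $A_-\cup B_+$, $A_-\cup B_-$. By Lemma~\ref{graph}(1) each of these is a single cycle, and a cycle of length $\ge 5$ is automatically a flag complex (no triangles to fill, trivially) and satisfies the no-$\square$-condition (no $4$-cycles at all). By Lemma~\ref{graph}(2), $\Gamma$ has no $3$- or $4$-cycles, so these cycles indeed have length $\ge 5$; hence every vertex link is a flag complex satisfying the no-$\square$-condition, and Theorem~\ref{npc} gives the $\textup{CAT}(-1)$ metric.

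For part (2), let $G=\pi_1(X)$, $\phi=f_*:G\to\pi_1(S^1)=\mathbb{Z}$, and $H=\mathrm{Ker}(\phi)$. As noted in the text just before the theorem, $\widetilde{f}:\widetilde{X}\to\mathbb{R}$ is a $\phi$-equivariant Morse function, and $G$ acts freely, properly, cocompactly, cellularly and by isometries on the $\textup{CAT}(0)$ cube complex $\widetilde{X}$, so the hypotheses of Bestvina--Brady Morse theory (Theorem~\ref{mt}) are in force. The strategy is to compute the ascending and descending links of vertices of $\widetilde{X}$, which — because $\widetilde{X}\to X$ is a covering and $\widetilde f$ is a lift of $f$ — agree with the ascending and descending links of the corresponding vertices of $X$ computed with $f$. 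I would then show that for each of the four vertices of $X$, both $Lk^{\uparrow}$ and $Lk^{\downarrow}$ are homeomorphic to $S^1$ (this is property~(2) promised at the start of Section~3). The Morse function on a square $C\cong[0,1]^2$ corresponding to the pair $(a,b)$ is $(x,y)\mapsto[x+y]$, with the two edge-directions recording the signs of $a$ and $b$: the vertex $(v_i,v_j)$ is the minimum of $f|_C$ exactly when both incident edges of $C$ point \emph{away} from it, i.e. when the sign pattern matches the ``descending at that corner'' condition, and the maximum in the opposite case. Tracing through the four vertices, the ascending link of $(v_1,v_3)$ is the subgraph of $\Gamma$ spanned by $A_-\cup B_-$ (edges pointing towards $v_2,v_4$, hence away from $v_1,v_3$), its descending link is spanned by $A_+\cup B_+$, and symmetrically for the other three vertices — in each case one gets one of the four spanned subgraphs, which by Lemma~\ref{graph}(1) is a cycle, hence homeomorphic to $S^1$.

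With the ascending/descending links identified as circles, the finiteness conclusion follows from Theorem~\ref{mt}. A circle $S^1$ is connected but not simply connected: $\widetilde H_0(S^1)=0$ and $\widetilde H_1(S^1)=\mathbb{Z}\neq 0$. Taking $n=1$ in part (2) of Theorem~\ref{mt} — the conditions ``$\widetilde H_k=0$ for $1\le k\le n-1$'' is vacuous, ``$\widetilde H_{n+1}=\widetilde H_2(S^1)=0$'' holds, and ``$\widetilde H_n=\widetilde H_1(S^1)\neq 0$'' holds — we conclude that $H$ is of type $FP_1$ but not of type $FP_2$; equivalently, $H$ is finitely generated but not finitely presented (a group is $FP_2$ and $FP_1$ iff, together with being $FP_2$, finitely presented, but the cleanest statement is simply that not $FP_2$ implies not finitely presented, since finite presentability implies $FP_2$). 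Alternatively one can invoke part (1) of Theorem~\ref{mt}: since the links are not simply connected, that theorem does not grant finite presentability, and part (2) with $n=1$ actively obstructs it.

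The main obstacle is the bookkeeping in part~(2): correctly matching the four sign patterns of square-corners to the four spanned subgraphs $A_\pm\cup B_\pm$, and being careful that ``minimum of $f\circ\chi_F$'' translates to ``both edges oriented away from the vertex'' under the chosen identification of each square with $[0,1]^2$ and the formula $(x,y)\mapsto[x+y]$. Everything else — verifying the link structure for part~(1), citing Lemma~\ref{graph}, and reading off homology of $S^1$ — is routine once that dictionary between orientations, corners of squares, and subgraphs of $\Gamma$ is pinned down. A secondary point worth stating explicitly is why the links of vertices of the cover $\widetilde X$ coincide with those of $X$: the covering map is a local isometry of cube complexes, so it restricts to an isomorphism on vertex links (ordinary, ascending, and descending), which is what lets us do all computations downstairs in the finite complex $X$.
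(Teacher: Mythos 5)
Your proposal matches the paper's approach (Gromov's link criterion plus Bestvina--Brady Morse theory), and your treatment of part (2) is correct, including the careful point that the covering $\widetilde X\to X$ is a local isometry, so links and ascending/descending links can be computed in the finite complex $X$. However, there is a genuine error in your identification of the vertex links in part (1), and it deserves flagging.

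You write that "the link of $(v_i,v_j)$ is exactly the bipartite subgraph of $\Gamma$ spanned by one of $A_+\cup B_+$, $A_+\cup B_-$, $A_-\cup B_+$, $A_-\cup B_-$." This is not correct, and in fact contradicts what you (correctly) say one sentence earlier: since every edge of $\Theta_1$ is incident to \emph{both} $v_1$ and $v_2$, all $22$ edge-ends of $\Theta_1$ contribute to the link of $(v_i,v_j)$, and likewise all $22$ edge-ends of $\Theta_2$. So the link has $44$ vertices, canonically labelled by $A_+\cup A_-\cup B_+\cup B_-=V(\Gamma)$, and the edges of the link --- one for each square $(a,b)$ with $\{a,b\}\in E(\Gamma)$ --- are exactly the edges of $\Gamma$. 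The full link of every vertex of $X$ is the graph $\Gamma$ itself, not a cycle. (The cycles $A_\pm\cup B_\pm$ are the \emph{ascending} and \emph{descending} links, which you identify correctly later; you appear to have conflated the two.) With the correct identification the argument for part (1) is simpler and more direct: $\Gamma$ is bipartite, hence has no $3$-cycles, hence is flag as a $1$-complex; and Lemma~\ref{graph}(2) says $\Gamma$ has no $4$-cycles, giving the no-$\square$-condition; Theorem~\ref{npc} then applies. As it stands, your reasoning reaches the right conclusion only because the lemma you cite happens to establish what is really needed, but the intermediate claim (that the link is a single cycle) is false, and the sentence "so these cycles indeed have length $\ge 5$" is a non sequitur once you realize the link has $44$ vertices and many cycles. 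You should replace the "key observation" paragraph with the correct identification $Lk((v_i,v_j),X)\cong\Gamma$ and then invoke Lemma~\ref{graph} directly.
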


\begin{proof}
By construction, the link of each vertex is homeomorphic to the graph $\Gamma$. 
Since $\Gamma$ is a flag simplicial complex with no empty-squares, 
$(1)$ follows from Theorem \ref{npc}. 
By construction the ascending and descending links of each vertex are homeomorphic to $S^1$. 
Therefore $(2)$ follows from Theorem \ref{mt}.
\end{proof}

\section{A subgroup of type $F_2$ but not type $F_3$}

We shall construct a three-dimensional cube complex $\Delta$ 
and a piecewise linear function $g:\Delta\to S^{1}$
that lifts to a $g_*$ equivariant Morse function $\widetilde{g}:\widetilde{\Delta}\to \mathbb{R}$
satisfying the hypothesis of Theorem \ref{mt} with $n=2$.
It will be shown that $\widetilde{\Delta}$ is a hyperbolic metric space.
The cube complex $\Delta$ will be constructed as a subcomplex of a product of finite graphs.

We first define graphs $U,V,W$, 
each of which is isomorphic to $K_{22,22}$, 
the complete bipartite graph with $22$ vertices in each ``part''. 
Let the parts of $U,V,W$ be $U_1,U_2$, $V_1,V_2$ and $W_1,W_2$ respectively. 
The vertices of $U_1,V_1,W_1$ are
$\{a_0^+,...,a_{10}^+,a_0^-,...,a_{10}^-\}$
and the vertices of $U_2,V_2,W_2$ are
$\{b_0^+,...,b_{10}^+,b_0^-,...,b_{10}^-\}$.

For each of the graphs $U,V,W$, we fix the following orientations on edges.
Given an edge $\{a_n^s,b_m^t\}$ for $0\leq m,n\leq 10,s,t\in \{+,-\}$,
if $s=t$ then the edge is oriented toward $a_n^s$ otherwise
the edge is oriented toward $b_m^t$.
So any vertex of $U,V,W$ has $11$ incoming and $11$ outgoing edges.
Declare each edge of $U,V,W$ to be isometrically 
identified with the unit interval $[0,1]$ 
in such a way that the edge is oriented towards the vertex identified with $1$.
Let $U\times V\times W$ be the product cube complex. 
We define a cube subcomplex $\Delta$ as follows.

\begin{defn}
 
 Let $(u,v,w)$ be a vertex in $U\times V\times W$. Then $(u,v,w)\in \Delta$ 
 if one of the following holds. 
 (Recall that $\Gamma$ is the graph defined in section $3$.)
 
 \begin{enumerate}
  \item For some $i\in \{1,2\}$, $u\in U_i, v\in V_i,w\in W_i$.
  
  \item $u\in U_1, v\in V_2$ and $\{u,v\}$ is an edge in $\Gamma$.
  
  \item $v\in V_1, w\in W_2$ and $\{v,w\}$ is an edge in $\Gamma$.
  
  \item $u\in U_2, w\in W_1$ and $\{u,w\}$ is an edge in $\Gamma$.
  
 \end{enumerate}
 
 We declare a cell of $U\times V\times W$ to be in $\Delta$ 
 if all its incident vertices are in $\Delta$.
 
\end{defn}

It follows immediately that $\Delta$ is a piecewise Euclidean cube complex. 
Vertices in $U\times V\times W$ that satisfy $(1)$ above are said to be \emph{type 1} vertices, 
  and vertices that satisfy either $(2),(3)$ or $(4)$ are said to be \emph{type 2} vertices.
  It is an easy exercise to show that any two vertices in $\Delta$ are connected
  by a path in $\Delta^{(1)}$. (Check this for type $1$ vertices first.)
  We conclude that $\Delta$ is connected.
 
  The graph $\Omega$ of Figure \ref{Omega} serves as a tool 
  for determining when a given vertex is in $\Delta$.
  The edges of the graph encode the conditions of the definition above as follows:
  Given a vertex $\tau=(u,v,w)$ such that $u\in U_i, v\in V_j, w\in W_k$,
  either $\tau$ is a type $1$ vertex (and hence is in $\Delta$), or 
  there is an edge connecting two of the three nodes $U_i,V_j,W_k$ in the graph $\Omega$.
  Then $\tau\in \Delta$ if and only if the 
  corresponding pair from $u,v,w$ forms an edge in $\Gamma$. 
  
\begin{figure}[!ht]
\centering
\def\svgwidth{0.25\columnwidth}
\def\svgscale{.3}
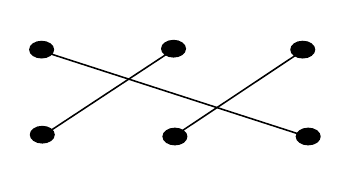
\caption{The graph $\Omega$}
\label{Omega}
\end{figure}

One observes that the map $U\times V\times W\to U\times V\times W;(u,v,w)\mapsto (w,u,v)$
and its iterates are cell permuting isometries
whose restriction to $\Delta$ induces a map $\Delta\to \Delta$ that is a cell permuting isometry.
These symmetries of our construction will be invoked in arguments that follow.
  
 The orientations on the edges 
 induce a PL function $f:\Delta\to S^1$, 
 which is $x\to [x]$ on each edge, and on the product it is defined as $(x,y,z)\to [x+y+z]$. 
 (Recall the identification of
  each edge with $[0,1]$ that was described above, 
  and the identification of $S^1$ with $\mathbb{R}/\mathbb{Z}$.) 
  The map $f$ lifts as a PL Morse function between 
  the universal covers $\widetilde{f}:\widetilde{\Delta}\to \mathbb{R}$. 
  Now we prove two key lemmas, the first of which examines the links of vertices.
  
  \begin{lemma}\label{main}
  Let $\tau\in \Delta$ be a vertex. Then the following holds.
  (Here $\star$ denotes the topological join and $\Upsilon$ is a discrete set of four points.)
  \begin{enumerate}
   \item If $\tau$ is a type $1$ vertex then $Lk(\tau,\Delta)$ 
   is homeomorphic to $\Upsilon\star \Upsilon\star \Upsilon$.
   
   \item  If $\tau$ is a type $2$ vertex then 
   $Lk(\tau,\Delta)$ is homeomorphic to $\Gamma\star \Upsilon$.
  \end{enumerate}
  In particular, $\Delta$ is nonpositively curved.
  Furthermore, in either case 
  $Lk^{\uparrow}(\tau,\widetilde{\Delta}),Lk^{\downarrow}(\tau,\widetilde{\Delta})$ 
  are both homeomorphic to $S^2$.

  \end{lemma}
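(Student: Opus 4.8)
The plan is to compute the link of each vertex $\tau$ of $\Delta$ directly from the product structure and the defining conditions, and then read off the ascending/descending links by restricting to the half of the link determined by the Morse function $\widetilde f$.

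First I would recall that in a product cube complex $U\times V\times W$, the link of a vertex $(u,v,w)$ is the join $Lk(u,U)\star Lk(v,V)\star Lk(w,W)$, and that $\Delta$ is the full subcomplex on its vertex set (a cell is in $\Delta$ iff all its vertices are), so $Lk(\tau,\Delta)$ is the full subcomplex of this join on the directions $(x,y,z)$ with $x$ a vertex of $Lk(u,U)$ etc., such that moving an $\epsilon$ in the corresponding direction lands at a vertex of $\Delta$ — equivalently, adjacent vertices of $\Delta$. So I need to enumerate the neighbours of $\tau$ in $\Delta^{(1)}$ and see which span cubes.

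For a \textbf{type 1} vertex, say $u\in U_1,v\in V_1,w\in W_1$ (the other case $i=2$ is symmetric, and the cyclic symmetry $(u,v,w)\mapsto (w,u,v)$ reduces further): a neighbour obtained by moving only in the $U$-factor goes to some $u'\in U_2$ with $(u',v,w)$; by condition (1) with $i$ mismatched this is \emph{not} type 1, and condition (2) would require $\{u',v\}\in\Gamma$ but here the $V$-coordinate is in $V_1$, not $V_2$, so in fact I must be careful — let me instead note that the edge $\{u',v,w\}$ need only have all its vertices (just $\tau$ and the neighbour) in $\Delta$, so it suffices that $(u',v,w)\in\Delta$. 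Since $u'\in U_2, v\in V_1, w\in W_1$, none of the four conditions applies, so there are \emph{no} $U$-edges out of a type-1 vertex into $U_2$ staying in $\Delta$ unless... Actually this shows type-1 vertices only connect within $U_1\times V_1\times W_1$ by $1$-cube moves — but that subcomplex has no edges of $\Gamma$ forced, so the link should still be built from the graph structure. I would carefully redo this: the point is that $Lk(\tau,\Delta)\cong \Upsilon\star\Upsilon\star\Upsilon$ where each $\Upsilon$ records the (two in-, two out-, but the claim says four points total) — so I expect each $\Upsilon$ to be the four vertices one can reach in that factor while keeping the other two coordinates fixed and staying in $\Delta$, which by the definition means moving to the ``other part'' via a $\Gamma$-edge; since $\Gamma$ restricted to the relevant parts is a cycle (Lemma \ref{graph}(1)), each vertex of the cycle has exactly two neighbours, giving $2+2=4$ directions in that factor. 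The join structure follows because the conditions on the three coordinate-pairs are independent. For a \textbf{type 2} vertex, say $u\in U_1,v\in V_2$ with $\{u,v\}\in\Gamma$ (other two cases by symmetry): in the $W$-factor, $w$ can be in $W_1$ or $W_2$, and condition (1) fails ($U,V$ parts disagree) while conditions (3),(4) relate $v,w$ or $u,w$; moving in the $W$-factor keeps $u,v$ fixed and $\{u,v\}\in\Gamma$ is already satisfied by... no — one needs $(u,v,w')\in\Delta$, and since $u\in U_1,v\in V_2$ the only applicable condition is (2), which holds, so in fact \emph{every} $w'$ in $W_1\cup W_2$ adjacent to $w$ in $W$ gives a neighbour — but $W=K_{22,22}$ so $w$ has $22$ neighbours, not $4$. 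This tells me the $\Upsilon$ factor must come from somewhere else, so I'd instead identify the $\Gamma$ factor with the joint $U$-$V$ directions and the $\Upsilon$ with the $W$-directions cut down by conditions (3),(4) to a $4$-cycle. The careful bookkeeping of exactly which neighbours survive is the main obstacle, and I would organize it by the graph $\Omega$ of Figure \ref{Omega}.

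Once $Lk(\tau,\Delta)\cong \Upsilon\star\Upsilon\star\Upsilon$ (type 1) or $\Gamma\star\Upsilon$ (type 2) is established, nonpositive curvature is immediate: a join of flag complexes is flag, $\Upsilon$ (discrete) is trivially flag, and $\Gamma$ is flag by Lemma \ref{graph} (bipartite, so no triangles to fill, hence vacuously flag — and more importantly no empty triangles since there are no $3$-cycles at all), so by the definition of nonpositive curvature (link of each vertex is flag) $\Delta$ is nonpositively curved. For the ascending/descending links: the Morse function on a cube $[0,1]^3$ is $(x,y,z)\mapsto x+y+z$, so at a vertex, a direction is ascending iff all its coordinates point in the increasing direction and descending iff all point in the decreasing direction; thus $Lk^\uparrow(\tau,\widetilde\Delta)$ is the full subcomplex of $Lk(\tau,\Delta)$ on the ``outgoing'' directions, and $Lk^\downarrow$ on the ``incoming'' ones. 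By the orientation convention on $U,V,W$ each vertex has exactly $11$ incoming and $11$ outgoing edges in each graph; intersecting with the $4$-point sets $\Upsilon$ (each a $4$-cycle in $\Gamma$, alternating in/out because $\Gamma$ is bipartite with the orientation as specified) leaves $2$ outgoing and $2$ incoming directions per factor. Hence in the type-1 case $Lk^\uparrow\cong \{2\text{ pts}\}\star\{2\text{ pts}\}\star\{2\text{ pts}\}=S^0\star S^0\star S^0\cong S^2$, and likewise $Lk^\downarrow\cong S^2$. In the type-2 case I need the ascending subcomplex of $\Gamma\star\Upsilon$ to be $S^1\star S^0\cong S^2$: the $\Upsilon$ contributes $S^0$ as before, and the ascending part of the $\Gamma$ factor should be a cycle — here I would use Lemma \ref{graph}(1), that each of the four ``sign-pattern'' subgraphs $A_+\cup B_+$, $A_+\cup B_-$, $A_-\cup B_+$, $A_-\cup B_-$ of $\Gamma$ is a cycle, together with the orientation rule on $U,V,W$ (which says $\{a,b\}$ is oriented toward $a$ when the signs agree and toward $b$ when they differ), to check that the outgoing directions within the $\Gamma$-factor form exactly one of these sign-pattern cycles (or a union of them forming a single cycle $S^1$), and symmetrically for incoming. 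The one genuinely delicate check — and the step I expect to be the real obstacle — is verifying that this ascending/descending piece of $\Gamma$ is connected and is a single circle rather than a disjoint union of arcs or circles; this is where the specific combinatorics of Definition \ref{Gamma} (the shifts $+1,+3,+5,+2$) and the agreement/disagreement of signs under the orientation convention must be matched up carefully, presumably again factor-by-factor using the cyclic $\mathbb Z/3$ symmetry to cut the casework down to one representative of each vertex type.
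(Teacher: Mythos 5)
Your overall plan — compute the link directly from the product structure and the membership conditions for $\Delta$, then restrict to the ascending/descending subcomplexes using the orientations — is exactly the paper's, but the bookkeeping is wrong at the key step and the mechanism that makes the construction work is never pinned down. For a type-$1$ vertex $(u,v,w)\in U_1\times V_1\times W_1$, you assert that for $u'\in U_2$ "none of the four conditions applies" to $(u',v,w)$. That is false: condition $(4)$ of the definition ($u'\in U_2$, $w\in W_1$, $\{u',w\}$ an edge in $\Gamma$) applies, and it is precisely this condition that cuts the $22$ potential $U$-moves down to the four $\Gamma$-neighbors of $w$. The cyclic pattern — $U$-moves are filtered by $\Gamma$-adjacency with $w$, $V$-moves by adjacency with $u$, $W$-moves by adjacency with $v$ — is the heart of the construction and you never identify it; the "$2+2=4$ directions" justification is a non-sequitur (what is really used is that every vertex of $\Gamma$ has degree four, two of each sign, by Lemma~\ref{graph}$(1)$). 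The same unresolved gap causes the self-contradiction in your type-$2$ analysis: having found $22$ unconstrained $W$-moves, you say $\Upsilon$ "must come from somewhere else" but then write "$\Upsilon$ with the $W$-directions cut down to a $4$-cycle." With $u\in U_1$, $v\in V_2$, $w\in W_1$, the constrained factor is $U$ (condition $(4)$ on $\{u',w\}$), giving the four-point $\Upsilon$; the $V$- and $W$-moves are both unconstrained ($22$ each) and are linked pairwise by condition $(3)$, giving the $\Gamma$-factor. And $\Upsilon$ is a discrete set of four points, not a $4$-cycle (indeed $\Gamma$ has no $4$-cycles).

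Your closing worry — that the ascending piece of the $\Gamma$-factor might be a disjoint union of arcs or circles rather than a single circle — is unfounded once the orientations are tracked precisely, and this is not where the work is. Take the paper's type-$2$ representative $u\in U_2$, $v\in V_1$, and write $u=b_m^t$, $v=a_n^s$. The ascending $U$-directions are exactly the eleven vertices $A_t$ (the edge from $u$ to $a_i^r$ is oriented away from $u$ iff $r=t$), and the ascending $V$-directions are exactly the eleven vertices of $B$ with sign opposite to $s$ (the edge from $v$ to $b_j^p$ is oriented away from $v$ iff $p\neq s$). The ascending link inside the $\Gamma$-factor is therefore the full induced subgraph on one of the four vertex sets listed in Lemma~\ref{graph}$(1)$, hence a single $22$-cycle; the descending link is the other "diagonal" choice, again a $22$-cycle. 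No further combinatorics of the shifts in Definition~\ref{Gamma} is needed at this step — those shifts are used only for the no-$\square$ condition and the flat-plane argument later.
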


 \begin{proof}
  
  Let $\tau=(u,v,w)$ be a type $1$ vertex.
  Assume that $u\in U_1,v\in V_1,w\in W_1$.
  Recall that $u,v,w\in \{a_0^+,...,a_{10}^+,a_0^-,...,a_{10}^-\}\subseteq V(\Gamma)$.
  Let the neighbors of $u,v,w$ in $\Gamma$ be 
  $$\{b_{k_1}^+,b_{k_2}^+,b_{k_3}^-,b_{k_4}^-\},
  \{b_{l_1}^+,b_{l_2}^+,b_{l_3}^-,b_{l_4}^-\},\{b_{j_1}^+,b_{j_2}^+,b_{j_3}^-,b_{j_4}^-\}$$
  respectively.
  The following are the $1$-cells adjacent to $\tau$ in $\Delta$:
  \begin{enumerate}
   \item $[u,b_{j_i}^+]\times v\times w$ for $1\leq i\leq 2$ and 
   $[u,b_{j_i}^-]\times v\times w$ for $3\leq i\leq 4$.
   \item $u\times [v,b_{k_i}^+]\times w$ for $1\leq i\leq 2$ 
   and $u\times [v,b_{k_i}^-]\times w$ for $3\leq i\leq 4$.
   \item $u\times v\times [w,b_{l_i}^+]$ for $1\leq i\leq 2$
   and $u\times v\times [w,b_{l_i}^-]$ for $3\leq i\leq 4$.
  \end{enumerate}
  
  It follows that $Lk(\tau,\Delta\cap (U\times v\times w))$,
  $Lk(\tau,\Delta\cap (u\times V\times w))$, $Lk(\tau,\Delta\cap (u\times v\times W))$
  are all discrete sets of four points each.
  Furthermore, it follows from the definition of $\Delta$ that
  $[u,p]\times [v,q]\times [w,r]$ is a cube in $\Delta$ for 
  each $p\in \{b_{j_1}^+,b_{j_2}^+,b_{j_3}^-,b_{j_4}^-\},
  q\in \{b_{k_1}^+,b_{k_2}^+,b_{k_3}^-,b_{k_4}^-\},
  r\in \{b_{l_1}^+,b_{l_2}^+,b_{l_3}^-,b_{l_4}^-\}$.
  So $Lk(\tau,\Delta)$ is the topological join of these sets.
  Observe that exactly two of the four $1$-cells in each of $(1),(2),(3)$
  are oriented away from $\tau$ and the remaining are oriented towards $\tau$.
  So it follows immediately that 
  $Lk^{\uparrow}(\tau,\widetilde{\Delta}),Lk^{\downarrow}(\tau,\widetilde{\Delta})$
  are homeomorphic to $S^2$.
  The case where $u\in U_2,v\in V_2,w\in W_2$ is similar.

  Now consider the case where $\tau=(u,v,w)\in \Delta$ is a type $2$ vertex.
  Assume that $u\in U_2,v\in V_1,w\in W_1$.
  The $1$-cells incident to $\tau$ in $\Delta$ are:
  \begin{enumerate}
   \item $[u,a_i^s]\times v\times w$ for $0\leq i\leq 10$, $s\in \{+,-\}$.
   \item $u\times [v,b_i^s]\times w$ for $0\leq i\leq 10$, $s\in \{+,-\}$.
   \item $u\times v\times [w,p]$,
   for $p\in \{b_{n_1}^+,b_{n_2}^+,b_{n_3}^-,b_{n_4}^-\}$
   where $\{b_{n_1}^+,b_{n_2}^+,b_{n_3}^-,b_{n_4}^-\}$
   are the four neighbors of $v$ in $\Gamma$.
   \end{enumerate}
   
  Now given $1$-cells $[u,a_i^s]\times v\times w$, $u\times [v,b_j^t]\times w$,
  observe that there is a square $[u,a_i^s]\times [v,b_j^t]\times w$
  in $\Delta$ if and only if $a_i^s,b_j^t$ are connected in $\Gamma$ by an edge.
  This means that 
  $Lk(\tau,\Delta\cap (U\times V\times w))\cong \Gamma$.
  Furthermore, the definition of $\Delta$ implies that 
  $[u,a_i^s]\times [v,b_j^t]\times [w,b_k^r]$
  is a cube in $\Delta$ if and only if $[u,a_i^s]\times [v,b_j^t]\times w$
  is a square in $\Delta$ and $u\times v\times [w,b_k^r]$ is a $1$-cell in $\Delta$.
  This means that $Lk(\tau,\Delta)$ is the topological join of $\Gamma$ and the 
  discrete set of four points, $Lk(\tau,\Delta\cap (u\times v\times W))$.
  
  Now $Lk^{\uparrow}(\tau,\Delta\cap (U\times V\times w))$,
  $Lk^{\downarrow}(\tau,\Delta\cap (U\times V\times w))$
  are both cycles,
  and $Lk^{\uparrow}(\tau,\Delta\cap (u\times v\times W))$,
  $Lk^{\downarrow}(\tau,\Delta\cap (u\times v\times W))$
  are both discrete sets of two points each.
  This means that $Lk^{\uparrow}(\tau,\widetilde{\Delta})\cong S^2$
  and $Lk^{\downarrow}(\tau,\widetilde{\Delta})\cong S^2$.
  For any other vertex $\tau$ of type $2$,
  the analysis is similar by symmetry of the construction.
\end{proof}
  So far we have shown the following.
  \begin{enumerate}
   \item $\Delta$ is a nonpositively curved cube complex.
   \item By Lemma \ref{main}
    and Theorem \ref{mt} it follows that
   $Ker(f_*:\pi_1(\Delta)\to \pi_1(S^1))$ is finitely presented but not of type $F_3$.
  \end{enumerate}

  Now we will show that $\widetilde{\Delta}$ is a hyperbolic metric
  space and hence $\pi_1(\Delta)$ is a hyperbolic group. 
  We have already established that $\widetilde{\Delta}$ is a $CAT(0)$
  space, and so by Theorem \ref{flatplanes} it suffices to show
  that $\widetilde{\Delta}$ does not contain isometrically embedded flat planes.
  
  Let $\tau=(u,v,w)$ be a type $2$ vertex in $\Delta$.
   From the previous lemma $Lk(\tau,\Delta)\cong \Gamma \star \Upsilon$,
   where $\Upsilon$ is a discrete set of four points.
   So $Lk(\tau,\Delta^{(1)})$ is naturally identified with $V(\Gamma)\cup \Upsilon$.
  
  \begin{lemma}\label{sedge}
   Let $\tau,\tau^{\prime}$ be type $2$ vertices in $\Delta$
   such that $[\tau,\tau^{\prime}]$ is
   a $1$-cell in $\Delta$.
   The $Lk(\tau,[\tau,\tau^{\prime}])\in \Upsilon$ if and only if
   $Lk(\tau^{\prime},[\tau,\tau^{\prime}])\in \Upsilon$.
  \end{lemma}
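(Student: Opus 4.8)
The plan is to translate the statement into a short check on the ``types'' of the two endpoints. Since $\tau$ and $\tau'$ are joined by a $1$-cell of the product complex $U\times V\times W$, they differ in exactly one coordinate, say the $i$-th, and because each of $U$, $V$, $W$ is the bipartite graph $K_{22,22}$ the $i$-th coordinates of $\tau$ and $\tau'$ lie in opposite halves. To each type $2$ vertex $(u,v,w)$ attach its \emph{type}, the triple $(\epsilon_1,\epsilon_2,\epsilon_3)\in\{1,2\}^3$ with $u\in U_{\epsilon_1}$, $v\in V_{\epsilon_2}$, $w\in W_{\epsilon_3}$; this triple is non-constant, since a constant triple gives a type $1$ vertex. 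The type of $\tau'$ is obtained from that of $\tau$ by flipping the $i$-th entry, and the hypothesis that $\tau'$ is again of type $2$ forbids this flip from producing $(1,1,1)$ or $(2,2,2)$.

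Next I would record the link of a type $2$ vertex a little more precisely than in Lemma~\ref{main}. Conditions $(2)$, $(3)$, $(4)$ in the definition of $\Delta$ single out, among the three coordinates of a type $2$ vertex, an \emph{active pair} lying in opposite halves: read as statements about the ordered pairs $(U,V)$, $(V,W)$, $(W,U)$, these conditions say that such a pair is ``active'' precisely when its first coordinate lies in half $1$ and its second in half $2$, and a short cyclic count (a non-constant cyclic word in $\{1,2\}$ of length three has exactly one place where a $1$ is immediately followed by a $2$) shows that exactly one of the three occurs. Extending the computation of Lemma~\ref{main} to all type $2$ vertices, $Lk(\tau,\Delta)\cong\Gamma\star\Upsilon$ where the four-point factor $\Upsilon$ is the link of $\tau$ in the direction of the active-pair coordinate lying in the same half as the remaining (free) coordinate; call this the \emph{distinguished direction} of $\tau$. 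Then $Lk(\tau,[\tau,\tau'])\in\Upsilon$ if and only if $\tau'$ differs from $\tau$ in the distinguished direction, that is, if and only if $i$ is the distinguished coordinate of $\tau$; so the lemma is equivalent to the statement that $i$ is distinguished for $\tau$ if and only if it is distinguished for $\tau'$.

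To finish I would split into two cases. If $i$ is the free coordinate of $\tau$, then $i$ is not distinguished for $\tau$; since the halves of the active-pair coordinates are unchanged, the cyclic count shows $\tau'$ has the same active pair, so $i$ is again its free coordinate and not distinguished for $\tau'$. If $i$ is an active-pair coordinate of $\tau$, it must be the distinguished one: were it the other active-pair coordinate, flipping $\epsilon_i$ would put it into the common half of the free coordinate and the remaining active-pair coordinate, making the type of $\tau'$ constant and so $\tau'$ of type $1$, against the hypothesis. It then remains to see that $i$ is distinguished for $\tau'$ too; by the cyclic symmetry $(u,v,w)\mapsto(w,u,v)$ one may assume $\tau$ has type $(1,1,2)$, whence $i$ is its second coordinate and $\tau'$ has type $(1,2,2)$, or type $(2,2,1)$, whence $i$ is its first coordinate and $\tau'$ has type $(1,2,1)$; in both of these cases the distinguished coordinate of $\tau'$, read off as above, is again $i$. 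This establishes the biconditional, hence the lemma.

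The step I expect to cost the most is pinning down the distinguished direction — extending the link computation of Lemma~\ref{main} uniformly to all type $2$ vertices and verifying the cyclic count that makes ``the active pair'' well defined. Given that, the case split above is short, since ``$i$ is distinguished for $\tau$'' is controlled entirely by the type of $\tau$ together with $i$, and the constraint that $\tau'$ also be of type $2$ eliminates the only case that could cause trouble.
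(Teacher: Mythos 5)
Your proof is correct, but it takes a longer and more computational route than the paper's. The paper's argument (after the reduction, as in your first paragraph, to the setup $\tau=(u,v,w)$, $\tau'=(u',v,w)$ with $u\in U_i$, $u'\in U_j$, $v\in V_k$, $w\in W_l$) is to observe that $Lk(\tau,[\tau,\tau'])\in\Upsilon$ holds exactly when the only active pair for vertices of the form $(u'',v,w)$ involves the $U$--coordinate — that is, when $U_j$ is joined to $V_k$ or $W_l$ in $\Omega$ — which, since $\Omega$ is a perfect matching and $\tau$ is type $2$, is equivalent to $\{V_k,W_l\}$ \emph{not} being an edge of $\Omega$. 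The same reasoning applied to $\tau'$ shows $Lk(\tau',[\tau,\tau'])\in\Upsilon$ is also equivalent to $\{V_k,W_l\}$ not being an edge. Since that condition makes no reference to $i$ or $j$, the two sides of the biconditional coincide immediately, and the proof is a few lines. Your approach instead pins down the $\Upsilon$--direction explicitly as a function of the type triple $(\epsilon_1,\epsilon_2,\epsilon_3)$ (the ``distinguished'' active-pair coordinate lying in the same half as the free coordinate) and then runs a case analysis on which coordinate is flipped, using the cyclic symmetry to cut the cases down to two. This is a valid and self-contained argument, and the explicit description of the $\Upsilon$--direction is a nice piece of bookkeeping that the paper leaves implicit; but it is heavier than necessary, and the paper's route — expressing both sides as the same condition on $\{V_k,W_l\}$ — avoids the case split entirely. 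Both arguments, yours and the paper's, rest on extending the link computation of Lemma~\ref{main} to all type $2$ vertices (which the paper does ``by symmetry'').
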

  
  \begin{proof}
  We assume that $\tau=(u,v,w),\tau^{\prime}=(u^{\prime},v,w)$.
   Let $u\in U_i,u^{\prime}\in U_j$ where $\{i,j\}=\{1,2\}$.
   Also, let $v\in V_k, w\in W_l$.
   Assume that $Lk(\tau,[\tau,\tau^{\prime}])\in \Upsilon$.
   It follows that $Lk(\tau,\Delta\cap (U\times v\times w))=\Upsilon$.
   So $U_j$ is connected by an edge in $\Omega$ with either $V_k$ or $W_l$.
   We will show that $U_i$ is connected by an edge with either $V_k$
   or $W_l$ in $\Omega$, and hence $Lk(\tau^{\prime},[\tau,\tau^{\prime}])\in \Upsilon$.
   
   Assume that this is not the case.
   Then since $\tau^{\prime}$
   is a type $2$ vertex it must be the case that $V_k,W_l$ 
   are connected by an edge in $\Omega$. 
   This cannot be true since $U_j$ is connected by an edge with
   either $V_k$ or $W_l$.
   This proves our assertion.
   By symmetry of our construction this
   follows for any arbitrary type $2$ vertex in $\Delta$.
   \end{proof}
  
  \begin{defn}
   A $1$-cell $[\tau,\tau^{\prime}]$ in $\Delta$ satisfying the statement of Lemma \ref{sedge} 
   i.e.,
   $Lk(\tau,[\tau,\tau^{\prime}])\in \Upsilon$ and 
   $Lk(\tau^{\prime},[\tau,\tau^{\prime}])\in \Upsilon$
   is called a \emph{special $1$-cell}.
   Denote the union of all special $1$-cells in $\Delta$ by $L$.
   A lift of a special $1$-cell in $\widetilde{\Delta}$
   is a special $1$-cell in $\widetilde{\Delta}$ and $\widetilde{L}$
   is the union of all special $1$-cells in $\widetilde{\Delta}$.
  \end{defn}

  Figure \ref{cube} depicts a cube in $\Delta$. 
    The three bold $1$-cells are the special $1$-cells,
    the vertices $\tau_1,\tau_2$ are the type $1$ vertices
    and the remaining vertices are of type $2$.

\begin{figure}[!ht]
\centering
\def\svgwidth{0.3\columnwidth}
\def\svgscale{.3}
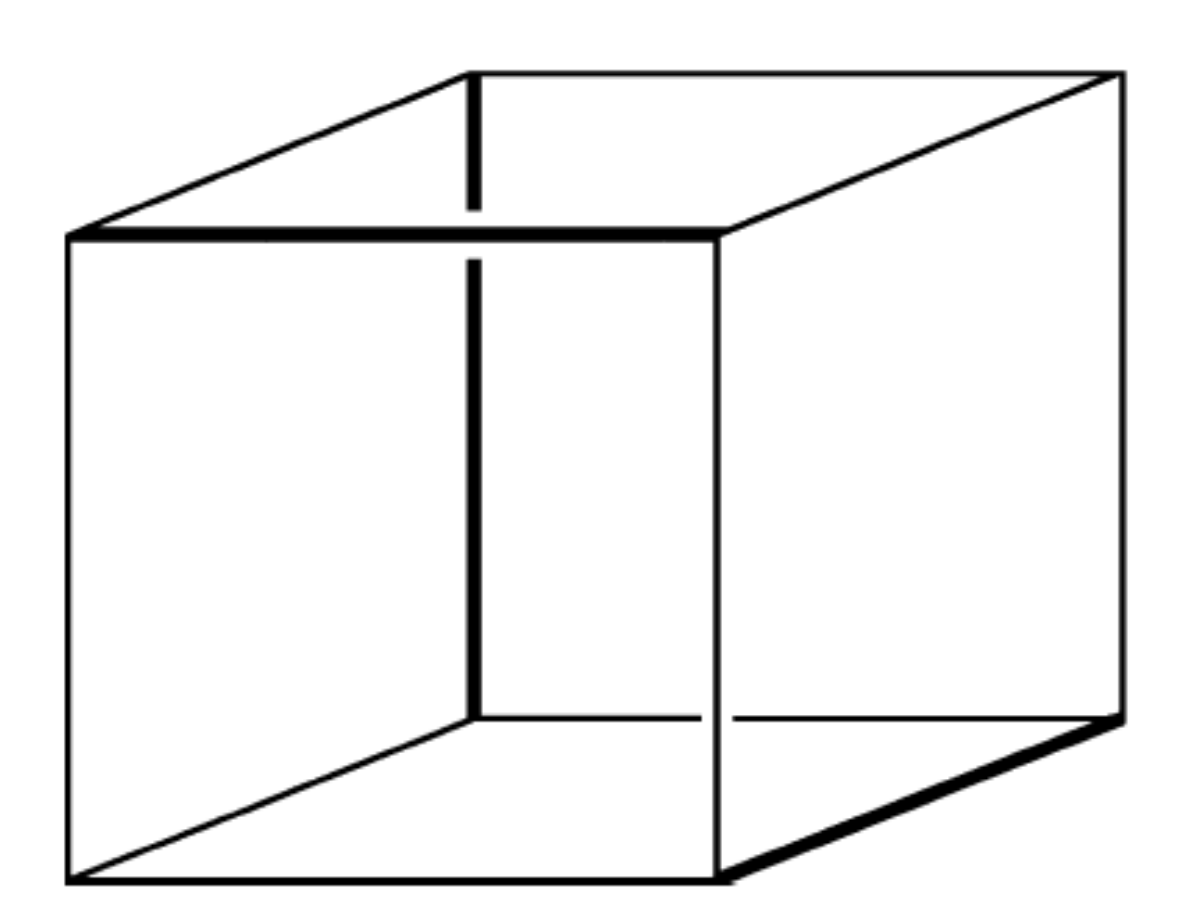
\caption{}
\label{cube}
\end{figure}
\begin{lemma}
   $\widetilde{\Delta}$ does not contain isometrically embedded flat planes, 
   and hence is a hyperbolic metric space.
\end{lemma}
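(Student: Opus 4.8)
The plan is to assume, for contradiction, that $\widetilde{\Delta}$ contains an isometrically embedded flat plane $P$, and to produce from it a $4$-cycle of $\Gamma$, contradicting Lemma \ref{graph}. Since $\widetilde{\Delta}$ is $\mathrm{CAT}(0)$ and $\pi_1(\Delta)$ acts on it properly, cocompactly and by isometries, Theorem \ref{flatplanes} then gives that $\widetilde{\Delta}$, hence $\pi_1(\Delta)$, is hyperbolic. I would first give $P\cong\mathbb{E}^2$ the polyhedral structure obtained by intersecting it with the closed cubes of $\widetilde{\Delta}$: this tiles $P$ by convex flat Euclidean polygons, each lying in a single cube, and at every vertex $x$ of the tiling the set $P\cap Lk(x,\widetilde{\Delta})$ is a closed locally geodesic curve of length $2\pi$ in the all-right piecewise spherical complex $Lk(x,\widetilde{\Delta})$. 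The link of (the interior of) a special $1$-cell is $\Gamma$, so at a transverse crossing of a special $1$-cell the corresponding circle would be a $4$-cycle of $\Gamma$ unless $P$ runs on along that $1$-cell into a vertex; hence one may reduce to the case where $P$ meets the $0$-skeleton, and by Lemma \ref{main} the links that then occur are $\Upsilon\star\Upsilon\star\Upsilon$ (at type $1$ vertices) and $\Gamma\star\Upsilon$ (at type $2$ vertices).

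The heart of the argument is a classification of the locally geodesic $2\pi$-circles that can occur in a flat plane. Every edge of $\Gamma$, $\Upsilon\star\Upsilon\star\Upsilon$ and $\Gamma\star\Upsilon$ has length $\pi/2$, and the only $2$-cells of $\Gamma\star\Upsilon$ are the triangles spanning an edge of $\Gamma$ together with a vertex of $\Upsilon$. Using this, together with the absence of $3$- and $4$-cycles in $\Gamma$ (Lemma \ref{graph}), one checks that such a circle cannot lie inside $\Gamma$, must meet $\Upsilon$, and is a chordless $4$-cycle $\xi-\gamma-\xi'-\gamma'$ of the $1$-skeleton with $\xi,\xi'\in\Upsilon$ antipodal along it and $\gamma,\gamma'$ non-adjacent vertices of $\Gamma$. (Circles through the interior of a triangle of $\Gamma\star\Upsilon$ do occur at an individual vertex, but they cannot be propagated across the adjoining special $1$-cell: the "parallel'' configuration at the next vertex would require a vertex of $U\times V\times W$ that is excluded by the definition of $\Delta$. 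Hence $P$ is a union of axis-parallel squares.) Similarly, a propagating $2\pi$-circle in $\Upsilon\star\Upsilon\star\Upsilon$ is a chordless $4$-cycle using exactly two of the three $\Upsilon$-factors. By Lemma \ref{sedge}, at each type $2$ vertex on $P$ the two edges $\xi,\xi'$ picked out by $P$ are directions of special $1$-cells, opposite one another in $P$; therefore $P\cap\widetilde{L}$ is a nonempty disjoint union of straight geodesic lines of the Euclidean plane $P$, no two of which cross, and these parallel lines cut $P$ into closed flat strips and half-planes.

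Now I would examine one such region $S$. Its interior contains no special $1$-cell, so by the previous step no type $2$ vertex of $\widetilde{\Delta}$; and tracing $S$ through consecutive cubes identifies it with an axis-parallel product $S=\ell\times\ell'$, where $\ell'$ is a bi-infinite non-backtracking path in one of the graphs $U,V,W$ — say $W$ — running along $S$ (its edges being special $1$-cells on the bounding lines and their translates), $\ell$ is a non-backtracking path in a second graph — say $U$ — running across $S$, and the coordinate in the remaining graph is a fixed vertex $v_0\in V(\Gamma)$. The definition of $\Delta$ then forces $\{u',w\}$ to be an edge of $\Gamma$ for every vertex $u'$ of $\ell$ lying in $U_2$ and every vertex $w$ of $\ell'$ lying in $W_1$. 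The endpoints of $\ell$ lie on bounding special lines, hence in $U_2$; being non-backtracking, $\ell$ therefore visits two distinct vertices $p\neq q$ of $U_2$, and $\ell'$, being bi-infinite, visits two distinct vertices $r\neq s$ of $W_1$. Then $\{p,r\},\{p,s\},\{q,r\},\{q,s\}$ are all edges of $\Gamma$, so $p-r-q-s-p$ is a $4$-cycle of $\Gamma$, contradicting Lemma \ref{graph}. Half-planes, and the cases where the roles of $U,V,W$ are permuted, are handled identically, using the cell-permuting symmetry $(u,v,w)\mapsto(w,u,v)$ of $\Delta$.

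The step I expect to be the main obstacle is the link classification in the second paragraph: pinning down precisely which locally geodesic $2\pi$-circles occur in $\Upsilon\star\Upsilon\star\Upsilon$ and $\Gamma\star\Upsilon$, ruling out the "diagonal'' circles through cell interiors via the non-propagation argument, and dealing cleanly with points of $P$ that are interior to cells of $\widetilde{\Delta}$ rather than at vertices. It is there, and again in the concluding $4$-cycle count, that the hypothesis that $\Gamma$ has girth at least $5$ is genuinely used.
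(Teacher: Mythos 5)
Your proposal is a genuinely different strategy from the paper's, and it leaves several load-bearing steps unjustified, so as written it does not constitute a proof. The paper (following Brady) argues \emph{locally}: Step~1 shows that any flat plane $i(\mathbb{R}^2)$ must have a \emph{transverse intersection point} $p$ with the union $\widetilde L$ of special $1$-cells, and Step~2 shows that the cubes of $i(\mathbb{R}^2)$ around $p$ biject with the edges of a cycle in $\Gamma$, each contributing total angle $\pi$ with its neighbor, so the cone angle at $i^{-1}(p)$ is $n\pi/2 > 2\pi$ because the girth of $\Gamma$ is at least $5$. Your proposal instead tries a \emph{global} decomposition: reduce to an axis-parallel plane, show $P\cap\widetilde L$ is a family of parallel lines, and then extract a $4$-cycle of $\Gamma$ from the strip between two such lines.

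The concrete gaps in your version are the following. First, your reduction from a transverse crossing at an interior point of a special $1$-cell to a $4$-cycle in $\Gamma$ is too quick: the link there is $S^0\star\Gamma$, and a closed geodesic of length $2\pi$ missing the two poles need not lie in the equatorial copy of $\Gamma$ — it can run through the interiors of the spherical triangles. Second, your ``non-propagation'' claim, which is needed to rule out exactly those diagonal circles at vertices and to conclude that $P$ is a union of axis-parallel squares, is asserted but not argued; it is the key step and I do not see how to push it through from what you have written. Third, you assert $P\cap\widetilde L$ is \emph{nonempty}, but you give no argument for this. This is precisely the content of the paper's Step~1, Case~4 (the analysis of the stack of eight cubes around a type~$1$ vertex), and it requires a separate argument — a plane could in principle avoid the $0$-skeleton entirely. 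Fourth, the identification of a strip $S$ with a literal product $\ell\times\ell'$ with \emph{one fixed coordinate} is not obviously forced: at a boundary (type $2$) vertex, the link circle is $\xi-\gamma-\xi'-\gamma'$ with $\xi,\xi'\in\Upsilon$ and $\gamma,\gamma'\in\Gamma$, and $\gamma,\gamma'$ can sit in either bipartition class of $\Gamma$, so the squares of $S$ touching the boundary need not all be of a single coordinate type; you would need an additional rigidity argument to rule out the strip ``turning'' between $U$- and $V$-directions. Finally, even granting the product structure, your last step uses the definition of $\Delta$ only for vertices with $u\in U_2$, $w\in W_1$; the constraints coming from the other quadrants (e.g.\ $w\in W_2$ forces $w$ to be one of only four $\Gamma$-neighbors of $v_0$) also bear on which paths $\ell'$ can occur and deserve to be checked explicitly, since you need $\ell'$ to actually reach two distinct $W_1$-vertices.

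None of this means your strategy cannot work — it has a similar flavor to arguments one sees for flats in cube complexes — but as written the key reductions are placeholders. By contrast, the paper's local argument bypasses the need to classify all geodesic circles in $\Gamma\star\Upsilon$ and $\Upsilon^{\star 3}$ and to describe the global combinatorics of $P$: it only needs the observation that the polygons around a single transverse point index the edges of a cycle in $\Gamma$, and that $\Gamma$ has girth at least $5$. If you want to rescue your approach, the two things to nail down first are (a) existence of a transverse intersection point with $\widetilde L$, and (b) a precise statement and proof of the non-propagation lemma.
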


  \begin{proof}
   The proof is similar to the proof of $6.1(3)$ in \cite{Brady},
   and we claim no originality here. 
   We adapt that argument to our construction.
   Let us assume that there is an isometric embedding $i:\mathbb{R}^2\to \widetilde{\Delta}$. 
    We say that a point $x\in i(\mathbb{R}^2)$ is a \emph{transverse intersection point} 
    if there is a neighborhood $U$ of $x$ in $i(\mathbb{R}^2)$ such that the 
    intersection of $U$ with $\widetilde{L}$ is the point $x$. 
    Following \cite{Brady}, our proof is divided into two steps.
    In \emph{step $1$} we will show that $i(\mathbb{R}^2)$ has a transverse intersection point.
    In \emph{step $2$} we will show that the angle around the transverse intersection
    point in $i(\mathbb{R}^2)$ is greater than $2\pi$, contradicting the fact that
    this is an isometric embedding.

{\bf Step 1}:
Let $C$ be a cube in $\widetilde{\Delta}$ such that $i(\mathbb{R}^2)\cap C$ is nonempty
and two dimensional. (Such cubes must exist in $\widetilde{\Delta}$ since
$i(\mathbb{R}^2)$ is an isometric embedding.)There are four cases to consider.

In the first case, $i(\mathbb{R}^2)$
intersects a special $1$-cell $e$ of $C$ in a vertex $p$.
Now $i(\mathbb{R}^2)$ must also intersect a neighboring cube $C^{\prime}$ of $C$ 
that shares a $2$-face with $C$ and
contains a special $1$-cell $e^{\prime}$ incident to $p$.
Then either $i(\mathbb{R}^2)$ contains $e^{\prime}$ or $p$ is a transverse intersection point.
Since $i(\mathbb{R}^2)$ is an isometric embedding, it cannot contain $e^{\prime}$
or else it would also contain $e$.
In the second case, $i(\mathbb{R}^2)$
intersects a special $1$-cell of $C$ in
an interior point,
in which case it is clear that this is a transverse intersection point.
In the third case,
$i(\mathbb{R}^2)$ contains a special $1$-cell of $C$.
In this case it transversely intersects a different special $1$-cell of $C$.
(Recall that $i(\mathbb{R}^2)\cap C$ is two dimensional and see Figure \ref{cube}).

Finally, consider the case where $i(\mathbb{R}^2)$ does not intersect a special $1$-cell
of $C$.
Then $i(\mathbb{R}^2)$ intersects a $1$-cell incident to a type $1$ vertex $\tau$
in $C$.
Let $J$ be the subcomplex of $\widetilde{\Delta}$ consisting of cubes in $\widetilde{\Delta}$ that have a 
nonempty intersection with $i(\mathbb{R}^2)$.
Recall that $Lk(\tau,\widetilde{\Delta})\cong \Upsilon\star \Upsilon\star \Upsilon$
where $\Upsilon$ is a discrete set of four points.
The set of cubes incident
to $\tau$ in $J$ is a subcomplex of a stack of $8$ cubes depicted in
Figure \ref{pattern}.
($C$ is one of the $8$ cubes.)
\begin{figure}[!ht]
\centering
\def\svgwidth{0.3\columnwidth}
\def\svgscale{.3}
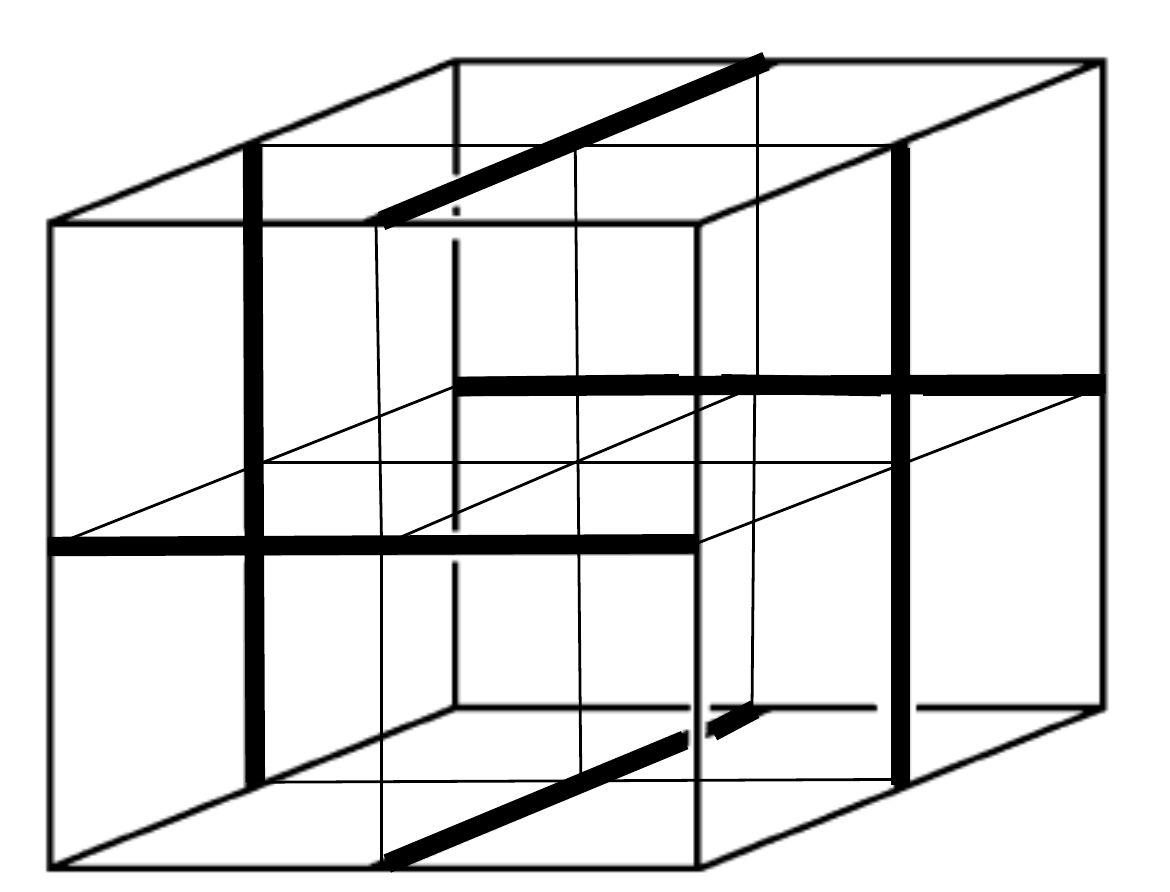
\caption{}
\label{pattern}
\end{figure}
Here the bold $1$-cells are the special $1$-cells.
Now by figure \ref{pattern} it must be the case that $i(\mathbb{R}^2)$ transversely 
intersects a special $1$-cell in a neighboring cube of $C$.

{\bf Step 2}: Now we demonstrate a contradiction to our
assumption that $i(\mathbb{R}^2)$ is an isometrically embedded flat plane in $\widetilde{\Delta}$.
This involves computing the angle in $i(\mathbb{R}^2)$ around a transverse intersection point 
$p$.

Let $Y$ be the subcomplex consisting of cubes in $\widetilde{\Delta}$ that contain $p$
and for which $C\cap i(\mathbb{R}^2)$ is $2$-dimensional.
For each cube $C$ in $Y$, $i^{-1}(i(\mathbb{R}^2)\cap C)$ is a polygon,
and $i^{-1}(i(\mathbb{R}^2)\cap Y)$ is a union of polygons in $\mathbb{R}^2$ that are incident
to $i^{-1}(p)$ such that the sum of the angles at $i^{-1}(p)$ in each polygon
is $2\pi$.

Let $C,C^{\prime}$ be cubes in $Y$
such that $i^{-1}(i(\mathbb{R}^2)\cap C),i^{-1}(i(\mathbb{R}^2)\cap C^{\prime})$ 
are adjacent polygons.
Then $i(\mathbb{R}^2)\cap (C\cup C^{\prime})$ is locally the intersection
of $i(\mathbb{R}^2)$ with a Euclidean half space.
This means that the angle sum of any two consecutive polygons in
$i^{-1}(i(\mathbb{R}^2)\cap Y)$ is $\pi$.
To establish a contradiction, it suffices to show that
the number of polygons in $i^{-1}(i(\mathbb{R}^2)\cap Y)$ is greater than $4$.
Let $e=[\tau,\tau^{\prime}]$ be a special $1$-cell containing $p$.
Recall that since $\tau$ is a type $2$ vertex, $Lk(\tau,\widetilde{\Delta})$
is naturally identified with $\Gamma\star \Upsilon$.
Now $Lk(\tau,Y)$ is a subcomplex of $\Gamma\star \Upsilon$
and by Lemma \ref{sedge} we know that $Lk(\tau,[\tau,\tau^{\prime}])\in \Upsilon$.
So there is a natural bijection between the aforementioned set of polygons
and the set of edges of a cycle in 
$\Gamma$.
Since all cycles in $\Gamma$
have more than four edges,
we have established that the angle around $p$ in $i(\mathbb{R}^2)$ is greater than $2\pi$
contradicting the fact that this is an isometric embedding.
\end{proof}

\section{Concluding remarks}

At this point we do not have a concrete way of distinguishing the groups
in Section $4$ from Brady's groups in \cite{Brady}, other than the method of construction.
(Our complexes are different from Brady's complexes since the links are different.)
In fact, there is a striking similarity between the two examples, even though the methods
of construction are entirely different.
Nevertheless, we do believe that our approach is less abstract.

This construction does not seem to have a natural generalization in higher dimensions.
It seems likely that any natural generalization in dimensions four and higher always
produces flat planes in the universal cover.
As a result, it is not clear whether such an approach can be used to 
construct hyperbolic groups with subgroups that are of type $F_n$ but not of type $F_{n+1}$
for $n>2$.

\end{document}